\newtheorem{theorem}{Theorem}[section]
\newtheorem{lemma}[theorem]{Lemma}
\newtheorem{corollary}[theorem]{Corollary}
\theoremstyle{definition}
\newtheorem{remark}[theorem]{Remark}
\numberwithin{equation}{section}
\begin{document}
\setcounter{page}{1}

\vspace*{2.0cm}
\title[Gradient estimates and Liouville type theorems]
{Cheng-Yau logarithmic gradient estimates for a nonlinear elliptic equation on smooth metric measure spaces}
\author[C. Jin, Y. Wang, F. Zeng]{Cheng Jin$^1$, Youde Wang$^{2}$, Fanqi Zeng$^{1,*}$ }
\maketitle
\vspace*{-0.6cm}

\begin{center}
{\footnotesize

$^1$School of Mathematics and Statistics, Xinyang Normal University, Xinyang, 464000, P.R. China\\
$^2$School of Mathematics and Information Sciences, Guangzhou University, Guangzhou, China;\\
$^2$ Hua Loo-Keng Key Laboratory of Mathematics, Institute of Mathematics, Academy of Mathematics and Systems Science, Chinese Academy of Sciences, Beijing, 100190, China;\\
$^2$ School of Mathematical Sciences, University of Chinese Academy of Sciences, Beijing, 100049, China
}\end{center}

\vskip 4mm {\footnotesize \noindent {\bf Abstract.}
In this paper, we consider the nonlinear elliptic equation
$$\Delta_fv^\tau+\lambda v=0$$
on a complete smooth metric measure space with $m$-Bakry-\'{E}mery Ricci curvature bounded from below, where $\tau>0$ and $\lambda$ are constant.
We obtain some new local gradient estimates for positive solutions to the equation using the Nash-Moser iteration technique. As applications of these estimates, we obtain a Liouville type theorem and a Harnack inequality, and the global gradient estimates for such solutions. Our results generalize and improve the estimates in Wang (J. Differential Equations 260:567-585, 2016) and Zhao (Arch. Math. (Basel) 114:457-469, 2020).

\noindent {\bf Keywords.}
Smooth metric measure space; gradient estimate; Liouville theorem; Harnack inequality}.

\noindent {\bf 2020 Mathematics Subject Classification.}
58J05, 35B45.

\renewcommand{\thefootnote}{}
\footnotetext{ $^*$Corresponding author.
\par
E-mail addresses: m18237140062@163.com (C. Jin), wyd@math.ac.cn (Y. Wang), zengfq@xynu.edu.cn (F. Zeng).
}

\section{Introduction}

In this paper, we consider Cheng-Yau type gradient estimates for the positive solutions of the following nonlinear elliptic equation
\begin{equation}\label{zjInt1}
\Delta_fv^{\tau}+\lambda v=0
\end{equation}
on a smooth metric measure space $(M^{n},g,e^{-f}d\nu)$ whose $m$-Bakry-\'{E}mery Ricci tensor
\begin{equation}\label{zjInt2}
{\rm Ric}_{f}^{m}={\rm Ric}+\nabla^{2}f-\frac{1}{m-n}{\rm d}f\otimes {\rm d}f,\quad\quad~~~~ m> n
\end{equation}
satisfies ${\rm Ric}_{f}^{m}\geq-(m-1)Kg$ for some constant $K\geq0$, where $\tau>0$ and $\lambda$ are constant, and $\Delta_{f}\cdot=\Delta -\langle \nabla f, \nabla\cdot\rangle$
is the weighted Laplacian for some smooth potential function $f$.
Here $d\nu$, ${\rm Ric}$, $\Delta$, $\nabla^{2}$ and $\nabla$ are the volume form, Ricci tensor, the usual Laplacian, Hessian and gradient operators associated with the metric $g$ respectively.

When $f$ is constant, $\rho=v^{\tau}$ and $p=\frac{1}{\tau}$, equation \eqref{zjInt1} becomes
\begin{equation}\label{zjInt3}
\Delta \rho+\lambda \rho^{p}=0.
\end{equation}
Equation \eqref{zjInt1} appears in differential geometry. Indeed, for any $n$-dimensional $(n \geq3)$ complete manifold
$(M, g)$, consider a pointwise conformal metric $\tilde{g}=\rho^{\frac{4}{n-2}}g$ for some smooth positive function $\rho$.
Then the scalar curvature $\tilde{R}$ of metric $\tilde{g}$ related to the scalar curvature $R$ of metric $g$ is given by
\begin{equation}\label{zjInt4}
\Delta \rho-\frac{n-2}{4(n-1)}Ru+\frac{n-2}{4(n-1)}\tilde{R}\rho^{\frac{n+2}{n-2}}=0.
\end{equation}
When $R=0$ and $\tilde{R}$ is a constant, \eqref{zjInt4} reduces to \eqref{zjInt3} with $\lambda=\frac{n-2}{4(n-1)}\tilde{R}$ and $\tau=\frac{n-2}{n+2}$.
If $M$ is compact and $\tilde{R}$ is constant, the existence of a positive solution $\rho$ is the well-known Yamabe problem. For more details we refer to \cite{Lee1987}.

Another important reason of studying equation\eqref{zjInt1} is that its parabolic counterpart is related to the porous medium
equation and the fast diffusion equation. It is a nonlinear heat equation in the form
\begin{equation}\label{zjInt5}
\partial_{t}v=\Delta v^{\tau}.
\end{equation}
Equation \eqref{zjInt5} with $\tau>1$ is called the porous medium equation, which describes the flow of an isentropic gas through a porous medium \cite{Muskat1937}.
Equation \eqref{zjInt5} with $0<\tau<1$ is called the fast diffusion equation, which appears in various geometric flows such as the Ricci flow on surfaces \cite{Hamilton1988} and the Yamabe flow \cite{Ye1994}. There is a lot of literature on the study of gradient estimates of the porous medium equation and the fast diffusion equation. We only cite \cite{Caozhu2015,Huangli2013,Huangma2017,Huangxu2016,JiangCheng2019,Luni2019,Zhu2013,Zhu2011,wu2023} here and one may find more references therein.

On the other hand, inspired by the work of Yau \cite{Yau1975} and Li and Wang \cite{LiWang2002}, many gradient estimates for equation \eqref{zjInt1} on Riemannian manifolds were obtained.
In \cite{Wang2016}, Wang established gradient estimates for positive solutions to the equation \eqref{zjInt1} on any smooth metric measure space whose $m$-Bakry-\'{E}mery Ricci tensor is bounded from below by $-(m-1)K$ with $K\geq0$. We state his main result as below.

\begin{theorem}\label{add1Int-1}(\cite{Wang2016})
Let $(M^{n}, g, e^{-f} d\nu)$ be a smooth metric measure space with ${\rm Ric}_{f}^{m}\geq-(m-1)Kg$ for some $K\geq0$ and $m >2$. Suppose that $v$ is a smooth
positive solution to the equation \eqref{zjInt1} and $u=\frac{\tau}{\tau-1}v^{\tau-1}$.

(1) If $\tau>1$, then
\begin{equation}\label{zjInt6}
\lambda\leq C_{1}\left(m,\tau,K,\sup_{M}u\right), \quad\quad \frac{|\nabla u|^{2}}{u^{2}}\leq C_{2}\left(m,\tau,\lambda,K,\sup_{M}u\right),
\end{equation}
where $C_{1} > 0$, $C_{2} \geq 0$ are explicit.

(2) If $1-\frac{2}{m}<\tau<1$, then
\begin{equation}\label{zjInt7}
\lambda\leq \overline{C}_{1}\left(m,\tau,K,\sup_{M}u\right), \quad\quad \frac{|\nabla u|^{2}}{u^{2}}\leq -\overline{C}_{2}\left(m,\tau,\lambda,K,\sup_{M}u\right),
\end{equation}
where $\overline{C}_{1} > 0$, $\overline{C}_{2} \geq 0$ are explicit and $u < 0$.
\end{theorem}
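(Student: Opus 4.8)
\medskip
\noindent\textbf{Proof strategy.}
The plan is to carry out the classical Cheng--Yau argument in the weighted setting: first reduce \eqref{zjInt1} to a scalar quasilinear equation for $u$, then feed it into a weighted Bochner formula together with the bound ${\rm Ric}_f^m\ge-(m-1)Kg$, and finally localize with a cutoff function and let the radius tend to infinity. Concretely, starting from
\[
\Delta_f(v^\tau)=\tau v^{\tau-1}\Delta_f v+\tau(\tau-1)v^{\tau-2}|\nabla v|^2=-\lambda v
\]
and using $u=\frac{\tau}{\tau-1}v^{\tau-1}$, $\nabla u=\tau v^{\tau-2}\nabla v$, one checks by a direct computation that
\begin{equation}\label{sk-eqn-u}
\Delta_f u=-\frac{1}{\tau-1}\,\frac{|\nabla u|^2}{u}-\lambda .
\end{equation}
Here $u>0$ when $\tau>1$ and $u<0$ when $\tau<1$, which is the source of the two cases. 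Setting $h=\log|u|$ (so $\nabla h=\nabla u/u$ and $w:=|\nabla h|^2=|\nabla u|^2/u^2$), \eqref{sk-eqn-u} becomes
\begin{equation}\label{sk-eqn-h}
\Delta_f h=-\frac{\tau}{\tau-1}\,w-\frac{\lambda}{u},
\end{equation}
so it suffices to bound $w$, the bound on $\lambda$ being extracted along the way.

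Next I would apply the weighted Bochner formula to $h$ and combine it with the elementary inequality $|\nabla^2 h|^2+\frac1{m-n}\langle\nabla f,\nabla h\rangle^2\ge\frac1m(\Delta_f h)^2$ and the curvature hypothesis to obtain
\begin{equation}\label{sk-boch}
\tfrac12\Delta_f w\ge\frac{(\Delta_f h)^2}{m}+\langle\nabla h,\nabla\Delta_f h\rangle-(m-1)Kw .
\end{equation}
Plugging in \eqref{sk-eqn-h}, together with $\langle\nabla h,\nabla(\lambda/u)\rangle=-\tfrac{\lambda}{u}w$, converts \eqref{sk-boch} into a differential inequality of the shape
\begin{equation}\label{sk-diffineq}
\tfrac12\Delta_f w\ \ge\ \frac{\tau^2}{m(\tau-1)^2}\,w^2\;-\;\frac{\tau}{\tau-1}\langle\nabla h,\nabla w\rangle\;+\;\Big(\tfrac{2\tau}{m(\tau-1)}+1\Big)\frac{\lambda}{u}\,w\;-\;(m-1)Kw\;+\;\frac{\lambda^2}{mu^2}.
\end{equation}
The coefficient $\tau^2/\big(m(\tau-1)^2\big)$ of $w^2$ is positive, and the point of the hypotheses $\tau>1$, respectively $1-\tfrac2m<\tau<1$, is precisely to force the coefficient of $\tfrac{\lambda}{u}w$ (and of the first-order term once it is absorbed) to have the sign one needs; whatever is left over of the nonlinear term is bounded using $\sup_M|u|$, which is why $\sup_M u$ enters the final constants.

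I would then localize in the usual way. Fix $p\in M$, let $r={\rm dist}(p,\cdot)$, and pick a cutoff $\psi$ with $\psi\equiv1$ on $B_R(p)$, $\operatorname{supp}\psi\subset B_{2R}(p)$, $0\le\psi\le1$, $|\nabla\psi|^2\le C\psi/R^2$, and $\Delta_f\psi\ge-C\big(\tfrac1{R^2}+\tfrac{\sqrt K}{R}\big)$; the last bound uses the weighted Laplacian comparison theorem, which is available because ${\rm Ric}_f^m\ge-(m-1)Kg$. At a maximum point $x_0$ of $\psi w$ over $B_{2R}(p)$ one has $\nabla(\psi w)(x_0)=0$ --- hence $\nabla w=-\tfrac{w}{\psi}\nabla\psi$ there --- and $\Delta_f(\psi w)(x_0)\le0$. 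Feeding \eqref{sk-diffineq} into $0\ge\Delta_f(\psi w)(x_0)$, eliminating $\langle\nabla h,\nabla w\rangle$ by the gradient relation, and using Young's inequality to absorb the first-order and the $\tfrac{\lambda}{u}$ terms into the $w^2$ term, one is left with a quadratic inequality for $y:=(\psi w)(x_0)$ whose coefficients depend only on $m,\tau,\lambda,K,R$ and $\sup_M u$. Solving it bounds $w$ on $B_R(p)$, and since $M$ is complete one lets $R\to\infty$, the $R$-dependent terms disappearing, to get the global estimate $\frac{|\nabla u|^2}{u^2}\le C_2(m,\tau,\lambda,K,\sup_M u)$. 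The bound $\lambda\le C_1(m,\tau,K,\sup_M u)$ then drops out of the same inequality: the term $\lambda^2/(mu^2)$ sitting on the favourable side of \eqref{sk-diffineq}, together with $u\le\sup_M u$, cannot be balanced by the remaining terms (at most linear in $y$, with $K$-dependent coefficients) unless $\lambda$ is bounded in terms of $m,\tau,K$ and $\sup_M u$.

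The step I expect to be the real obstacle is handling the nonlinear term $\lambda/u$ in \eqref{sk-eqn-h}: unlike the linear Schr\"odinger-type situation there is no a priori lower bound for $|u|$, so this term cannot be discarded but has to be carried through the entire Bochner computation and controlled solely in terms of $\sup_M u$; monitoring the sign of its contribution in \eqref{sk-diffineq} is exactly what forces the dichotomy $\tau>1$ versus $1-\tfrac2m<\tau<1$ and produces the intricate explicit form of $C_1,C_2$. A useful technical device, if the naive bookkeeping does not close, is to estimate a slightly modified quantity such as $w+c\,\lambda/u$ for a suitably chosen constant $c$, designed so that the worst part of the nonlinear term cancels.
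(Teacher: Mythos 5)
First, a point of reference: the paper does not prove Theorem \ref{add1Int-1} at all --- it is quoted from Wang \cite{Wang2016} purely as background, and the authors only remark that ``the method they employ is the maximum principle.'' So there is no in-paper proof to compare against; your proposal has to be judged against Wang's original argument, whose general shape (maximum principle applied to $|\nabla \log u|^2$ after a weighted Bochner formula, localized by a cutoff) you have correctly identified. Your reduction is also correct in detail: $\Delta_f u=-\tfrac{1}{\tau-1}\tfrac{|\nabla u|^2}{u}-\lambda$, the equation for $h=\log|u|$, the $m$-Bakry--\'Emery refinement $|\nabla^2h|^2+\tfrac{1}{m-n}\langle\nabla f,\nabla h\rangle^2\ge\tfrac1m(\Delta_f h)^2$, and the coefficients in your differential inequality all check out.

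The genuine gap is at the step you yourself flag, and it is not a bookkeeping issue but a structural one. Write $s=\lambda/u$. The terms you propose to ``bound using $\sup_M|u|$'' are $\tfrac{s^2}{m}+\bigl(\tfrac{2\tau}{m(\tau-1)}+1\bigr)sw$, and when $\lambda$ has the unfavourable sign (e.g.\ $\lambda<0$, $\tau>1$, so $s<0$) the quantity $s$ is \emph{not} controlled by $\sup_M u$: one would need a positive lower bound for $u$, which is not available. The only $s$-uniform estimate is the completed square $\tfrac{s^2}{m}+csw\ge-\tfrac{mc^2}{4}w^2$ with $c=\tfrac{2\tau}{m(\tau-1)}+1$, and since $\tfrac{m}{4}\bigl(\tfrac{2\tau}{m(\tau-1)}+1\bigr)^2>\tfrac{\tau^2}{m(\tau-1)^2}$ for every $\tau>1$, this absorption annihilates the good $w^2$ term and the quadratic inequality at the maximum point never closes. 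Consequently the ``technical device'' you mention only in your final sentence --- estimating a modified quantity of the form $w+c\,\lambda/u$ for a suitably chosen $c$ --- is not an optional fallback but the essential idea of the proof: it is precisely what trades the uncontrolled $\lambda/u$ for a term controlled by $\sup_M u$, and it is where the restriction $1-\tfrac2m<\tau$ and the dichotomy between the two cases actually get used. As written, your proposal is a correct setup followed by an unproved (and, in the naive form you first describe, false) closure of the maximum-principle inequality.
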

As a corollary, the author deduced when $v$ is non-trivial a lower bound for $\sup_{M} v$ (if $\tau>1$), respectively an upper bound for $\inf_{M} v$ (if $1-\frac{2}{m}<\tau<1$), and consequently a Liouville type theorem. A Harnack inequality was also proved in \cite{Wang2016}.

Recently, Zhao \cite{Zhao2020} studied gradient estimates for positive solutions of the nonlinear elliptic equation
\begin{equation}\label{zjInt8}
\Delta_Vv^{\tau}+\lambda v=0
\end{equation}
on a Riemannian manifold $(M, g)$ with the $k$-Bakry-\'{E}mery Ricci curvature bounded
from below, where $\Delta_V$ denotes a weighted Laplacian associated to a certain
vector field $V$. We state his main result as below.

\begin{theorem}\label{add2Int-1}(\cite{Zhao2020})
Let $(M^{n}, g)$ be a complete Riemannian manifold. Suppose ${\rm Ric}_{V}^{k}\geq-Kg$ for some $K\geq0$ and $k >2$.
Let $v$ be a positive solution to the equation \eqref{zjInt8}. For any $\sigma>\frac{1}{(k-1)^{2}}$, if
$1\leq\tau<1+\frac{1}{(1+\sigma)(k-1)}$, then
\begin{equation}\label{zjInt9}
\sup_{B_{R/2}(o)}\frac{|\nabla v|}{v}\leq C_{3}\left(\sigma,k,\tau,K,\lambda,R,\left(\inf_{B_{R/2}(o)} v\right)^{1-\tau}\right).
\end{equation}
\end{theorem}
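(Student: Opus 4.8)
The plan is to linearize the equation and then run a Cheng--Yau maximum principle argument on the logarithmic gradient. Writing $v=e^{\varphi}$, equation \eqref{zjInt8} becomes
\begin{equation*}
\Delta_V\varphi+\tau|\nabla\varphi|^2=-\tfrac{\lambda}{\tau}\,v^{1-\tau},
\end{equation*}
so, setting $w:=|\nabla\varphi|^2=|\nabla v|^2/v^2$ and $b:=v^{1-\tau}$, one has $\Delta_V\varphi=-\tau w-\tfrac{\lambda}{\tau}b$ and $\nabla b=(1-\tau)\,b\,\nabla\varphi$. Since $\tau\geq1$, the exponent $1-\tau$ is $\leq0$, so $b$ is bounded above on $B_R(o)$ by a quantity of the form $\big(\inf v\big)^{1-\tau}$, and estimating $\sup_{B_{R/2}(o)}|\nabla v|/v$ is the same as estimating $\sup_{B_{R/2}(o)}\sqrt{w}$.

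The heart of the matter is a differential inequality for $w$. I would apply the weighted Bochner formula for $\Delta_V$, use the defining inequality of the $k$-Bakry-\'{E}mery tensor in the form $|\nabla^2\varphi|^2+\tfrac1{k-n}\langle V,\nabla\varphi\rangle^2\geq\tfrac1k(\Delta_V\varphi)^2$ — or, to reach the sharp threshold, a refined version of Bochner's formula that singles out the $\nabla\varphi$-direction of the Hessian and, together with the ensuing Young inequalities, carries a free parameter $\sigma$ — and then insert ${\rm Ric}_V^k\geq-Kg$. Substituting the expressions for $\Delta_V\varphi$ and for $\langle\nabla\varphi,\nabla\Delta_V\varphi\rangle=-\tau\langle\nabla\varphi,\nabla w\rangle+\tfrac{\lambda(\tau-1)}{\tau}bw$, and using the bound on $b$ to turn every term that is linear in $b$ into one linear in $w$, one is led to an inequality of the shape
\begin{equation*}
\tfrac12\Delta_V w\;\geq\;\varepsilon\,w^2-\tau\langle\nabla\varphi,\nabla w\rangle-C\big(K+|\lambda|\,b+R^{-2}\big)w-C\lambda^2b^2
\end{equation*}
with $C=C(k,\tau,\sigma)$ and $\varepsilon=\varepsilon(k,\tau,\sigma)$. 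The whole weight of the hypotheses sits in this step: the bookkeeping leaves the coefficient $\varepsilon$ in front of the good term $w^2$, and verifying that $\varepsilon>0$ exactly when $\sigma>\tfrac1{(k-1)^2}$ and $1\leq\tau<1+\tfrac1{(1+\sigma)(k-1)}$ is the main obstacle and, in effect, the content of the estimate; the term $\langle\nabla\varphi,\nabla w\rangle$ is only a technical nuisance, dealt with at the maximum point below.

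It remains to localize. Choose $\eta\in C_c^\infty(B_R(o))$ with $\eta\equiv1$ on $B_{R/2}(o)$, $0\leq\eta\leq1$, $|\nabla\eta|^2/\eta\leq CR^{-2}$ and $|\Delta\eta|\leq CR^{-2}$; by the weighted Laplacian comparison theorem under ${\rm Ric}_V^k\geq-Kg$ (available since $k>n$), $\Delta_V\eta\geq-C(1+\sqrt{K}\,R)R^{-2}$ on $B_R(o)$. Apply the maximum principle to $F:=\eta^2w$. At a point $x_0$ where $F$ attains its maximum, $\nabla F=0$ gives $\nabla w=-2w\,\nabla\eta/\eta$, which turns $\langle\nabla\varphi,\nabla w\rangle$ into a quantity controlled by $w^{3/2}|\nabla\eta|/\eta$; then $\Delta_VF\leq0$ combined with the displayed inequality for $\Delta_Vw$, multiplied through by $\eta^2$ and repeatedly fed into Young's inequality to absorb the $w^{3/2}$- and sub-quadratic-in-$w$ contributions into $\varepsilon\eta^2w^2$, yields $(\eta^2w)^2(x_0)\leq C(k,\tau,\sigma)\big[R^{-4}+K^2+K+|\lambda|\,A+\lambda^2A^2\big]$, where $A$ is the bound on $b$. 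Since $\eta\equiv1$ on $B_{R/2}(o)$, for any $x\in B_{R/2}(o)$ we get $w(x)\leq F(x_0)$, hence $\sup_{B_{R/2}(o)}|\nabla v|/v\leq C_3\big(\sigma,k,\tau,K,\lambda,R,(\inf_{B_{R/2}(o)}v)^{1-\tau}\big)$, which is \eqref{zjInt9}. (Taking $F=\eta^2w$ rather than $\eta w$ is the usual device for absorbing the stray powers of $\eta^{-1}$ produced by Young's inequality; alternatively, Nash--Moser iteration applied to the same differential inequality gives the same bound.)
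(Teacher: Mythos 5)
This statement is quoted from \cite{Zhao2020} as background; the paper itself offers no proof of it and only remarks that the method used there is the maximum principle. Your outline is consistent with that attribution: conformal change $v=e^{\varphi}$, Bochner formula for $\Delta_V$ with the $k$-Bakry-\'Emery refinement, a differential inequality $\tfrac12\Delta_Vw\geq\varepsilon w^2-\tau\langle\nabla\varphi,\nabla w\rangle-\cdots$, and a cutoff-times-$w$ maximum principle argument. The elementary computations you do write down ($\Delta_V\varphi=-\tau w-\tfrac{\lambda}{\tau}b$, $\nabla b=(1-\tau)b\nabla\varphi$, the resulting formula for $\langle\nabla\varphi,\nabla\Delta_V\varphi\rangle$, and the bound $b\leq(\inf v)^{1-\tau}$ for $\tau\geq1$) are all correct.

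The genuine gap is the one you yourself flag: you never produce the coefficient $\varepsilon(k,\tau,\sigma)$, nor verify that it is positive exactly on the range $\sigma>\tfrac1{(k-1)^2}$, $1\leq\tau<1+\tfrac1{(1+\sigma)(k-1)}$. That verification is not a routine bookkeeping afterthought --- it is the entire content of the hypotheses, and it is where the parameter $\sigma$ actually has to be introduced (via a splitting of the Hessian in the $\nabla\varphi$-direction and a Cauchy inequality with weight $\sigma$ applied to the cross term $\tau w\,\varphi_{11}$, which is what produces the $(k-1)$-dependent thresholds). Without carrying out that algebra, one cannot rule out that the "good" $w^2$ term is swallowed by the $-2\tau w\varphi_{11}$ and $(\Delta_V\varphi)^2/(k-1)$ contributions, which is precisely what happens outside the stated range of $\tau$. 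A secondary, smaller point: you invoke the weighted Laplacian comparison for $\Delta_V\eta$ "since $k>n$"; for a general vector field $V$ one should justify the comparison under ${\rm Ric}_V^k\geq-Kg$ (it does hold, but it is not the classical statement and deserves a citation or an argument). As written, the proposal is a correct plan along the expected lines rather than a proof.
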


The method they employ is the maximum principle. The gradient estimates in Theorem \ref{add1Int-1} and Theorem \ref{add2Int-1} depend on the
bound of the solution. Moreover, the gradient estimates are not Cheng-Yau type.

In \cite{ChengYau1975}, Cheng and Yau proved the well-known Cheng-Yau type gradient estimate for positive harmonic functions.
We state their main result as below.
\begin{theorem}\label{add3Int-1}(\cite{ChengYau1975})
Let $(M^{n}, g)$ be a complete Riemannian manifold. Suppose ${\rm Ric}\geq-Kg$ for some $K\geq0$.
Then for any positive harmonic function $v$ in a geodesic ball $B_{R/2}(o)$, there is a constant $C_4(n)$ depending
only on $n$ such that

\begin{equation}\label{zjInt10}
\sup_{B_{R/2}(o)}\frac{|\nabla v|}{v}\leq C_4(n)\frac{1+\sqrt{K}R}{R}.
\end{equation}
\end{theorem}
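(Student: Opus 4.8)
The plan is to carry out the classical Bochner-formula argument for the function $u=\log v$. Since $v$ is positive and harmonic, $u$ is well defined and satisfies $\Delta u=-|\nabla u|^{2}$. Writing $w=|\nabla u|^{2}$ and applying the Bochner identity
\begin{equation*}
\tfrac12\Delta|\nabla u|^{2}=|\nabla^{2}u|^{2}+\langle\nabla u,\nabla\Delta u\rangle+{\rm Ric}(\nabla u,\nabla u),
\end{equation*}
together with the Newton inequality $|\nabla^{2}u|^{2}\ge(\Delta u)^{2}/n=w^{2}/n$, the hypothesis ${\rm Ric}(\nabla u,\nabla u)\ge-Kw$, and the identity $\langle\nabla u,\nabla\Delta u\rangle=-\langle\nabla u,\nabla w\rangle$ coming from $\Delta u=-w$, one obtains the basic differential inequality
\begin{equation*}
\tfrac12\Delta w\ \ge\ \tfrac1n w^{2}-\langle\nabla u,\nabla w\rangle-Kw .
\end{equation*}

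Next I would localize with a cutoff. Let $r(x)=d(o,x)$ and pick $\eta\in C^{\infty}([0,\infty))$ with $0\le\eta\le1$, $\eta\equiv1$ on $[0,\tfrac12]$, ${\rm supp}\,\eta\subset[0,1]$, and $|\eta'|+|\eta''|\le C$; set $\phi=\eta(r/R)$, so that $|\nabla\phi|^{2}\le CR^{-2}$ on all of $M$. The Laplacian comparison theorem, which applies precisely because ${\rm Ric}\ge-Kg$, gives $\Delta r\le C(n)\bigl(\sqrt K+1/r\bigr)$ in the barrier sense; since $\eta'\le0$ and $\eta'\ne0$ forces $r\ge R/2$, this yields the one-sided bound $\Delta\phi\ge-C(n)(1+\sqrt K R)R^{-2}$, the lack of smoothness of $r$ on the cut locus being handled by Calabi's trick of moving the base point slightly along a minimizing geodesic. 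Now consider $G=\phi^{2}w$ on the compact set $\overline{B_{R}(o)}$ (using Hopf--Rinow). If $G\equiv0$ the estimate is trivial, so assume $G$ has a positive maximum at an interior point $x_{0}$; there $\nabla G=0$ and $\Delta G\le0$, and automatically $\phi(x_{0})>0$, $w(x_{0})>0$.

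Evaluating at $x_{0}$: from $\nabla G=0$ one gets $\phi\,\nabla w=-2w\,\nabla\phi$, hence $\langle\nabla\phi,\nabla w\rangle=-2w\phi^{-1}|\nabla\phi|^{2}$ and $\langle\nabla u,\nabla w\rangle=-2w\phi^{-1}\langle\nabla u,\nabla\phi\rangle$. Multiplying the Bochner inequality by $\phi^{2}$, substituting the expansion of $\Delta G=\phi^{2}\Delta w+4\phi\langle\nabla\phi,\nabla w\rangle+(2\phi\Delta\phi+2|\nabla\phi|^{2})w$, and using $\Delta G\le0$, one arrives at
\begin{equation*}
\tfrac1n\phi^{2}w^{2}\ \le\ -w\phi\,\Delta\phi+3w|\nabla\phi|^{2}-2w\phi\,\langle\nabla u,\nabla\phi\rangle+K\phi^{2}w\qquad\text{at }x_{0}.
\end{equation*}
The first two terms on the right are bounded by $C(n)(1+\sqrt K R)^{2}R^{-2}\,w$ after using $|\nabla\phi|^{2}\le CR^{-2}$ and the Laplacian bound; the cross term is handled by $2\phi|\nabla\phi|\,w^{3/2}\le\tfrac1{2n}\phi^{2}w^{2}+2n|\nabla\phi|^{2}w$, the first piece being reabsorbed into the left-hand side; and $K\phi^{2}w\le(1+\sqrt K R)^{2}R^{-2}w$. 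This gives $\phi^{2}w^{2}\le C(n)(1+\sqrt K R)^{2}R^{-2}w$ at $x_{0}$, hence $G(x_{0})=\phi^{2}(x_{0})w(x_{0})\le C(n)(1+\sqrt K R)^{2}R^{-2}$. Since $G\le G(x_{0})$ everywhere and $\phi\equiv1$ on $B_{R/2}(o)$, we conclude $\sup_{B_{R/2}(o)}|\nabla u|^{2}\le C(n)(1+\sqrt K R)^{2}R^{-2}$; taking square roots and recalling $|\nabla u|=|\nabla v|/v$ yields \eqref{zjInt10}. The main obstacle is the bookkeeping in this last step: one must use $G=\phi^{2}w$ (rather than $\phi w$), split the gradient cross term by Young's inequality with the correct small weight so that it is reabsorbed by $\tfrac1n\phi^{2}w^{2}$, ensure every remaining error is controlled by a fixed multiple of $(1+\sqrt K R)^{2}R^{-2}w$, and justify the Laplacian comparison estimate at points where $r$ is not differentiable; the rest is a routine application of the Bochner formula and the maximum principle.
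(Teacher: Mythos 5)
This statement is quoted from Cheng--Yau \cite{ChengYau1975} as background; the paper gives no proof of it, so there is no internal argument to compare against. Your proof is the classical Bochner--cutoff--maximum-principle argument for $u=\log v$, and it is correct: the differential inequality $\tfrac12\Delta w\ge\tfrac1n w^2-\langle\nabla u,\nabla w\rangle-Kw$ is right, the first-order condition $\phi\,\nabla w=-2w\,\nabla\phi$ at the maximum of $G=\phi^2w$ is used correctly to eliminate $\langle\nabla\phi,\nabla w\rangle$ and to rewrite the transport term, the Young-inequality split $2\phi|\nabla\phi|w^{3/2}\le\tfrac1{2n}\phi^2w^2+2n|\nabla\phi|^2w$ reabsorbs correctly, and the Laplacian comparison plus Calabi's trick and Hopf--Rinow are exactly the right supporting facts. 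One caveat, which is really a defect of the statement as transcribed in the paper rather than of your argument: your cutoff is supported in $B_R(o)$ and you apply the Bochner inequality there, so you need $v$ harmonic on $B_R(o)$, not merely on $B_{R/2}(o)$ as the theorem literally says; this is the standard formulation of the Cheng--Yau estimate and the paper's hypothesis should be read that way.

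It is worth contrasting your route with the technique the paper uses for its own main results (which contain this theorem as the special case $\tau=1$, $\lambda=0$, $f$ constant). The paper avoids the pointwise maximum principle entirely and instead derives the differential inequality for $h^{\alpha_0}$ (Lemma \ref{lem2-2}), integrates it against test functions to get an $L^\beta$ bound (Lemmas \ref{lem3-1}--\ref{lem3-2}), and runs a Nash--Moser iteration through the Saloff-Coste Sobolev inequality to reach $L^\infty$. For harmonic functions your maximum-principle proof is shorter and gives a cleaner constant; the integral approach is what lets the paper handle the nonlinear term $\lambda v^{2-\tau}$ over the stated ranges of $\tau$ without importing bounds on $v$ itself, which is precisely where the earlier maximum-principle arguments of \cite{Wang2016,Zhao2020} lose sharpness.
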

We note that Cheng-Yau type gradient estimats depend only on $n$, $K$ and $R$, and it does not depend on some other geometric quantities; moreover, Cheng-Yau type gradient estimats are more significant and useful in geometric analysis, since it can derive a strong Liouville theorem and Harnack inequalities. There have been extensive studies on Cheng-Yau type gradient estimats over the recent decades; see for example \cite{WangZhang2011,WangWei2023,HeWangWei2024,huangguo2024,WangWangWei2024,huWanghe2024} and references therein.

Inspired by these works, we employ the Nash-Moser iteration
technique to prove a Cheng-Yau type gradient estimate for positive solutions to equation\eqref{zjInt1} on any smooth metric measure space with $m$-Bakry-\'{E}mery Ricci tensor bounded from below. Our first main result is in the following.

\begin{theorem}\label{Int-1}
Let $(M^{n}, g, e^{-f} d\nu)$ be a complete $n$-dimensional smooth metric measure space with ${\rm Ric}_{f}^{m}\geq-(m-1)Kg$ for some $K\geq0$ and $m>2$. Suppose that $\tau >0$ and $v$ is a smooth
positive solution to the equation \eqref{zjInt1} on a geodesic ball $B_{R}(o)\subset M$. If $m$, $\lambda$ and $\tau$ satisfy
\begin{equation}\label{Int5}
\lambda \bigg(\frac{m+1}{m-1}-\frac1\tau \bigg)\geq 0,
\end{equation}
or
\begin{equation}\label{Int6}
1-\frac{2+\sqrt{2}}{m+1+\sqrt{2}}\leq \tau \leq 1-\frac{2-\sqrt{2}}{m+1-\sqrt{2}},
\end{equation}
then there holds
\begin{equation}\label{Int10}
\sup_{B_{R/2}(o)}\frac{|\nabla v|}{v}\leq c(m,\tau,\lambda)\frac{1+\sqrt{K}R}{R},
\end{equation}
where the constant $c(m,\tau,\lambda)$ depends only on $m$, $\tau$ and $\lambda$.
\end{theorem}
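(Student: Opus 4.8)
The plan is to reduce \eqref{Int10} to a Cheng--Yau type differential inequality for $w:=|\nabla\log v^{\tau}|^{2}$ and then run a Nash--Moser iteration. Put $u=\log v^{\tau}=\tau\log v$, so that $\tfrac{|\nabla v|}{v}=\tfrac1\tau w^{1/2}$ and it suffices to bound $\sup_{B_{R/2}(o)}w$. Writing $\rho=v^{\tau}=e^{u}$, $v=e^{u/\tau}$, equation \eqref{zjInt1} becomes
\[
\Delta_{f}u=-w-\lambda\phi,\qquad \phi:=v^{1-\tau}=e^{(\frac1\tau-1)u}>0 ,
\]
and since $\nabla\phi=(\tfrac1\tau-1)\phi\,\nabla u$ we have $\langle\nabla u,\nabla\phi\rangle=(\tfrac1\tau-1)\phi w$. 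Note $\phi$ is solution-dependent; the whole role of the hypotheses \eqref{Int5}--\eqref{Int6} will be to make it disappear.

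Next I would feed this into the weighted Bochner formula $\tfrac12\Delta_{f}|\nabla u|^{2}=|\nabla^{2}u|^{2}+\langle\nabla u,\nabla\Delta_{f}u\rangle+{\rm Ric}_{f}(\nabla u,\nabla u)$, using the sharp $m$-Bakry--\'{E}mery refinement of the Hessian estimate: decomposing $\nabla^{2}u$ along $\nabla u$ and its orthogonal complement gives $|\nabla^{2}u|^{2}\ge u_{11}^{2}+\tfrac{(\Delta u-u_{11})^{2}}{n-1}$ with $u_{11}=\tfrac{\langle\nabla u,\nabla w\rangle}{2w}$, and replacing $\Delta u$ by $\Delta_{f}u+\langle\nabla f,\nabla u\rangle$ and invoking ${\rm Ric}_{f}^{m}={\rm Ric}_{f}-\tfrac{{\rm d}f\otimes{\rm d}f}{m-n}\ge-(m-1)Kg$ (a Cauchy--Schwarz step with vanishing discriminant, which is where $n-1$ is upgraded to $m-1$) yields
\[
\tfrac12\Delta_{f}w\ \ge\ \tfrac{\langle\nabla u,\nabla w\rangle^{2}}{4w^{2}}+\tfrac{1}{m-1}\Big(w+\lambda\phi+\tfrac{\langle\nabla u,\nabla w\rangle}{2w}\Big)^{2}-\langle\nabla u,\nabla w\rangle-\lambda\big(\tfrac1\tau-1\big)\phi w-(m-1)Kw .
\]
Expanding the square, the $\tfrac{\langle\nabla u,\nabla w\rangle^{2}}{w^{2}}$ terms combine into $\tfrac{m}{4(m-1)}\tfrac{\langle\nabla u,\nabla w\rangle^{2}}{w^{2}}$, against which the cross term $\tfrac{\lambda\phi\langle\nabla u,\nabla w\rangle}{(m-1)w}$ is absorbed by Young's inequality; collecting the rest one arrives at
\[
\tfrac12\Delta_{f}w\ \ge\ \tfrac{w^{2}}{m-1}+\tfrac{\lambda^{2}\phi^{2}}{m}+\lambda\Big(\tfrac{m+1}{m-1}-\tfrac1\tau\Big)\phi w-\tfrac{m-2}{m-1}\langle\nabla u,\nabla w\rangle-(m-1)Kw .
\]
Here the hypotheses enter exactly: under \eqref{Int5} the coefficient $\lambda\big(\tfrac{m+1}{m-1}-\tfrac1\tau\big)$ is nonnegative, so the $\phi w$ and $\tfrac{\lambda^{2}\phi^{2}}{m}$ terms may just be dropped; under \eqref{Int6}, which rewrites as $\big|\tfrac1\tau-\tfrac{m+1}{m-1}\big|\le\tfrac{\sqrt2}{m-1}$, one instead uses $\tfrac{\lambda^{2}\phi^{2}}{m}+\lambda\big(\tfrac{m+1}{m-1}-\tfrac1\tau\big)\phi w\ge-\tfrac{m}{4}\big(\tfrac1\tau-\tfrac{m+1}{m-1}\big)^{2}w^{2}$ and the elementary identity $\tfrac{1}{m-1}-\tfrac{m}{4}\big(\tfrac{\sqrt2}{m-1}\big)^{2}=\tfrac{m-2}{2(m-1)^{2}}>0$ (valid since $m>2$) to keep the coefficient of $w^{2}$ positive. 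In either case, with $\phi$ gone, one gets on $B_{R}(o)$ the clean inequality
\[
\tfrac12\Delta_{f}w\ \ge\ c_{0}\,w^{2}-\tfrac{m-2}{m-1}\langle\nabla u,\nabla w\rangle-(m-1)Kw ,\qquad c_{0}=c_{0}(m,\tau)>0 .
\]

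The third step is a De Giorgi--Nash--Moser iteration for this inequality with respect to ${\rm d}\mu=e^{-f}{\rm d}\nu$. Multiplying by $\psi^{2}w^{q}$ ($\psi$ a cutoff supported in $B_{R}(o)$, $q\ge q_{0}(m,\tau)$) and integrating by parts only in the $\Delta_{f}w$ term --- crucially, the drift term is handled directly via $|\langle\nabla u,\nabla w\rangle|\le w^{1/2}|\nabla w|$ and Young's inequality rather than by parts, since an integration by parts would reintroduce $\phi$ through $\Delta_{f}u$ --- one obtains a Caccioppoli-type bound
\[
\tfrac{c_{0}}{2}\int\psi^{2}w^{q+2}\,{\rm d}\mu+\tfrac{q}{4}\int\psi^{2}w^{q-1}|\nabla w|^{2}\,{\rm d}\mu\ \le\ \tfrac{2}{q}\int w^{q+1}|\nabla\psi|^{2}\,{\rm d}\mu+(m-1)K\int\psi^{2}w^{q+1}\,{\rm d}\mu .
\]
Since ${\rm Ric}_{f}^{m}\ge-(m-1)Kg$ with $m>n$ forces (by the Bakry--\'{E}mery volume comparison of Wei--Wylie together with a weighted Poincar\'e inequality) a local $L^{2}$-Sobolev inequality of dimension $m$ for ${\rm d}\mu$ on geodesic balls, one iterates the above over $q_{i}=q_{0}\chi^{i}$, $\chi=\tfrac{m}{m-2}$, along a shrinking family of balls; a base bound $\tfrac{1}{\mu(B_{3R/4})}\int_{B_{3R/4}}w^{q_{0}}\,{\rm d}\mu\lesssim(K+R^{-2})^{q_{0}}$ comes from the same estimate at the fixed exponent $q_{0}$ after Young-absorbing the $w^{q_{0}+1}$ and $|\nabla\psi|^{2}w^{q_{0}+1}$ terms into $w^{q_{0}+2}$. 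Summing the geometric series, and removing the exponential volume factor $e^{c(m)\sqrt{K}R}$ by first applying the argument on balls of radius $\min(R,K^{-1/2})$ (on which that factor is bounded), one concludes $\sup_{B_{R/2}(o)}w\le C(m,\tau)\tfrac{(1+\sqrt{K}R)^{2}}{R^{2}}$; taking square roots and dividing by $\tau$ gives \eqref{Int10}.

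The main obstacle is the second step: producing the Bochner-type inequality in precisely the form above, so that \eqref{Int5} and \eqref{Int6} turn out to be exactly the conditions that both eliminate the solution-dependent term $\lambda\big(\tfrac{m+1}{m-1}-\tfrac1\tau\big)\phi w$ and keep the coefficient of $w^{2}$ strictly positive; this rests on the sharp $m$-Bakry--\'{E}mery Hessian refinement and a careful accounting of every cross term. The iteration is then routine, the only delicate points being the treatment of the drift term without integration by parts and the bookkeeping of $K$ needed to reach the scale-invariant Cheng--Yau bound.
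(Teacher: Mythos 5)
Your proposal is correct and takes essentially the same route as the paper: the logarithmic substitution, the weighted Bochner formula with the $m$-Bakry--\'Emery refinement of the Hessian term, the identification of \eqref{Int5} and \eqref{Int6} as exactly the conditions under which the solution-dependent terms $\lambda(\tfrac{m+1}{m-1}-\tfrac1\tau)\phi w$ and $\lambda^2\phi^2$ can be dropped or absorbed while keeping the coefficient of $w^2$ positive, followed by a Saloff-Coste type Sobolev inequality feeding a Moser iteration in which the drift term is handled pointwise by Cauchy--Schwarz rather than by parts. The remaining differences are cosmetic --- the paper extracts the extra $u_{11}^2$-positivity from a high power $h^\alpha$ and removes the $e^{c(m)(1+\sqrt{K}R)}$ Sobolev factor by letting the base exponent $t_0$ grow like $1+\sqrt{K}R$, whereas you work at a fixed exponent and shrink the ball radius, and you omit the standard truncation $(w-\epsilon)^+$ needed in the integral inequality where $w$ vanishes --- but these are routine variants of the same argument.
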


By carefully analyzing the conditions \eqref{Int5} and \eqref{Int6} in Theorem \ref{Int-1}, it is easy to see that
the following results hold.

\begin{corollary}\label{Int-1-3}
Let $(M^{n}, g,e^{-f} d\nu)$ be a complete $n$-dimensional smooth metric measure space with ${\rm Ric}_{f}^{m}\geq-(m-1)Kg$ for some $K\geq0$. Suppose that $\tau >0$ and $v$ is a smooth
positive solution to the equation \eqref{zjInt1} on a geodesic ball $B_{R}(o)\subset M$. If $m$, $\lambda$ and $\tau$ satisfy
\begin{equation}\label{Int11}
\lambda\geq 0,\quad \tau\geq\ 1-\frac{2}{m+1},
\end{equation}
or
\begin{equation}\label{Int12}
\lambda\leq 0,\quad 0<\tau\leq1-\frac{2-\sqrt{2}}{m+1-\sqrt{2}},
\end{equation}
then we have
$$\sup\limits_{B_{R/2}(o)}\frac{|\nabla v|}{v}\leq c(m,\lambda ,\tau)\frac{1+\sqrt{K}R}{R}.$$
\end{corollary}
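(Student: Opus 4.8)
The plan is to obtain Corollary \ref{Int-1-3} as an immediate consequence of Theorem \ref{Int-1}: I would simply check that each of the alternative hypotheses \eqref{Int11} and \eqref{Int12} forces one of the two conditions \eqref{Int5}, \eqref{Int6} of that theorem to hold, after which the asserted estimate is literally the conclusion \eqref{Int10}.

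Consider first \eqref{Int11}. Since $1-\frac{2}{m+1}=\frac{m-1}{m+1}$ and $\tau>0$, the hypothesis $\tau\geq 1-\frac{2}{m+1}$ is equivalent to $\frac{1}{\tau}\leq\frac{m+1}{m-1}$, i.e. $\frac{m+1}{m-1}-\frac1\tau\geq 0$; multiplying by $\lambda\geq 0$ gives exactly \eqref{Int5}, so Theorem \ref{Int-1} yields the estimate. Now consider \eqref{Int12}, which I would split according to the size of $\tau$. If $\tau\geq 1-\frac{2+\sqrt2}{m+1+\sqrt2}$, then combined with the standing upper bound $\tau\leq 1-\frac{2-\sqrt2}{m+1-\sqrt2}$ we are precisely in the interval \eqref{Int6}. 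If instead $\tau<1-\frac{2+\sqrt2}{m+1+\sqrt2}$, then the point to verify --- by clearing denominators, which reduces it to $\sqrt2(m+1)\geq 2\sqrt2$, true for $m>1$ --- is the comparison $\frac{2+\sqrt2}{m+1+\sqrt2}\geq\frac{2}{m+1}$, whence $1-\frac{2+\sqrt2}{m+1+\sqrt2}\leq 1-\frac{2}{m+1}=\frac{m-1}{m+1}$. Thus $\tau<\frac{m-1}{m+1}$, so $\frac{m+1}{m-1}-\frac1\tau<0$; multiplying by $\lambda\leq 0$ again gives \eqref{Int5}. In every case the hypotheses of Theorem \ref{Int-1} are satisfied, and \eqref{Int10} is the desired bound.

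There is no genuine analytic difficulty here; the whole argument is a short case check, and all the real work has been done in Theorem \ref{Int-1}. The only places calling for a little care are the elementary threshold comparisons: that $1-\frac{2+\sqrt2}{m+1+\sqrt2}\leq 1-\frac{2}{m+1}$, which is exactly what makes \eqref{Int5} and \eqref{Int6} jointly cover all $\tau$ with $\lambda\leq0$ permitted by \eqref{Int12}, and, en route, that the interval in \eqref{Int6} is nonempty and contained in $(0,1)$ --- both of which follow from $m>1$ (hence a fortiori from the condition $m>2$ needed to invoke Theorem \ref{Int-1}) by the same cross-multiplication manipulation.
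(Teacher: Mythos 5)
Your proposal is correct and is exactly the argument the paper intends: the corollary is stated as an immediate consequence of Theorem \ref{Int-1} ("by carefully analyzing the conditions \eqref{Int5} and \eqref{Int6}\dots"), and your case check — \eqref{Int11} gives \eqref{Int5} directly, while \eqref{Int12} splits into \eqref{Int6} for $\tau\geq 1-\frac{2+\sqrt2}{m+1+\sqrt2}$ and into \eqref{Int5} (via $\tau<\frac{m-1}{m+1}$ and $\lambda\leq0$) otherwise — is precisely the verification the paper leaves to the reader. The elementary comparisons you flag are all valid for $m>2$, so nothing is missing.
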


\begin{remark} Here we want to give several remarks of the above Cheng-Yau type gradient estimates.

(1) Compared with the previous work (cf. Theorem \ref{add1Int-1} and Theorem \ref{add2Int-1}), Theorem \ref{Int-1}
and Corollary \ref{Int-1-3} extend $\tau$ to a larger range.

(2) In Theorem \ref{Int-1} and Corollary \ref{Int-1-3}, the gradient estimates on a positive solution $v$ do
not involve the bound of $v$. Meanwhile, the gradient estimates obtained in Theorem \ref{add1Int-1} and Theorem \ref{add2Int-1} concern the bound of $v$. Hence we improve the previous results in Theorem \ref{add1Int-1} and Theorem \ref{add2Int-1}.

(3) Theorem \ref{Int-1} and Corollary \ref{Int-1-3} provide a united expression of previous gradient estimates in Theorem \ref{add1Int-1} for various ranges of $\tau$.
\end{remark}

As an application of Theorem \ref{Int-1}, we get the following Liouville type theorem.

\begin{corollary}\label{Int-1-4}
Let $(M^{n}, g, e^{-f} d\nu)$ be a complete $n$-dimensional smooth metric measure space with ${\rm Ric}_{f}^{m}\geq 0$. Suppose that $\tau >0$ and $v$ is a smooth
positive solution to the equation \eqref{zjInt1} on a geodesic ball $B_{R}(o)\subset M$. If $m$, $\lambda$ and $\tau$ satisfy
$$\lambda\geq 0,\quad \tau\geq\ 1-\frac{2}{m+1},$$
or
$$\lambda\leq 0,\quad 0<\tau\leq1-\frac{2-\sqrt{2}}{m+1-\sqrt{2}},$$
then $v$ is a positive constant on $M$.
\end{corollary}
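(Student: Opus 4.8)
The plan is to derive Corollary~\ref{Int-1-4} as an immediate limiting consequence of Corollary~\ref{Int-1-3}. Note that ${\rm Ric}_f^m\ge 0$ is exactly the lower bound ${\rm Ric}_f^m\ge-(m-1)Kg$ in the special case $K=0$, and the pair $(\lambda,\tau)$ is assumed to satisfy either \eqref{Int11} or \eqref{Int12}; thus Corollary~\ref{Int-1-3} applies verbatim. Reading the hypothesis so that $v$ is a positive solution of \eqref{zjInt1} on all of the (complete) manifold $M$, fix an arbitrary point $p\in M$. For every $R>0$ the geodesic ball $B_R(p)$ is well defined, so Corollary~\ref{Int-1-3}, applied with center $o=p$ and $K=0$, gives
\[
\frac{|\nabla v|}{v}(p)\le \sup_{B_{R/2}(p)}\frac{|\nabla v|}{v}\le \frac{c(m,\lambda,\tau)}{R}.
\]

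The one point that must be used is that the constant $c(m,\lambda,\tau)$ is genuinely independent of $R$ --- this is precisely the Cheng--Yau feature of the estimate stressed after Theorem~\ref{add3Int-1}, already built into Theorem~\ref{Int-1}. Granting it, letting $R\to\infty$ in the displayed inequality forces $|\nabla v|(p)=0$. Since $p$ was arbitrary, $\nabla v\equiv 0$ on $M$, hence $v$ is constant; as $v>0$ by hypothesis, $v$ is a positive constant on $M$.

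For completeness I would add the observation that such a conclusion is consistent only when $\lambda=0$: plugging a constant $v$ into \eqref{zjInt1} gives $\Delta_f v^{\tau}=0$ and hence $\lambda v=0$, which together with $v>0$ forces $\lambda=0$; conversely, when $\lambda=0$ every positive constant solves \eqref{zjInt1}. I do not anticipate any genuine obstacle: all of the analytic work sits in Theorem~\ref{Int-1} and Corollary~\ref{Int-1-3}, and the Liouville statement is a one-line $R\to\infty$ argument. The only care needed is to invoke the gradient estimate with the ball centered at the point where one wishes to show that $\nabla v$ vanishes, and to use that the constant there does not grow with $R$.
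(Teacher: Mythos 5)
Your proposal matches the paper's own proof: both apply the Cheng--Yau gradient estimate of Theorem~\ref{Int-1} (equivalently Corollary~\ref{Int-1-3}) with $K=0$ to get $\sup_{B_{R/2}}|\nabla v|/v\le c(m,\tau,\lambda)/R$ and let $R\to\infty$. Your added remark that the conclusion forces $\lambda=0$ is a correct and worthwhile consistency check that the paper omits, but it does not change the argument.
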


\begin{remark}
(1) We extend the range of the corresponding power $\tau$ for the same problem as in \cite{Wang2016}. In addition, comparing
with the Liouville type result derived in \cite{Wang2016}, we don't need the bound of $v$.

(2) The gradient estimate in \cite{Zhao2020} is not Cheng-Yau type, which cannot derive the Liouville
property.
\end{remark}

Another consequence of Theorem \ref{Int-1} is the following Harnack inequality.

\begin{corollary}\label{Int-1-5}
Let $(M^{n}, g, e^{-f} d\nu)$ be a complete $n$-dimensional smooth metric measure space with ${\rm Ric}_{f}^{m}\geq-(m-1)Kg$ for some $K\geq0$. Suppose that $\tau >0$ and $v$ is a smooth
positive solution to the equation \eqref{zjInt1}, defined on a geodesic ball $B_{R}(o)\subset M$, with constants $m$, $\lambda$, $\tau$ satisfying \eqref{Int11} or \eqref{Int12}. Then, for any $x,y\in B_{R/2}(o)$ there holds
$$v(x)\leq e^{c(\tau,m,\lambda)(1+\sqrt{K}R)}v(y).$$
\end{corollary}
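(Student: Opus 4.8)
The plan is to obtain the Harnack inequality directly from the Cheng--Yau logarithmic gradient estimate, by integrating $|\nabla\log v|$ along radial geodesics issuing from the center $o$; this is the standard passage from such an estimate to a Harnack inequality.

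First I would note that the conditions \eqref{Int11} and \eqref{Int12} are exactly the hypotheses of Corollary~\ref{Int-1-3}, so that corollary applies and gives
$$\sup_{B_{R/2}(o)}\frac{|\nabla v|}{v}\le c(m,\tau,\lambda)\,\frac{1+\sqrt{K}R}{R}.$$
Since $v$ is smooth and positive on $B_R(o)$, this is precisely the bound $|\nabla\log v|\le c(m,\tau,\lambda)(1+\sqrt{K}R)/R$ throughout $B_{R/2}(o)$.

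Next, fix $x\in B_{R/2}(o)$ and let $\gamma:[0,\ell]\to M$ be a unit-speed minimizing geodesic from $o$ to $x$, so that $\ell=d(o,x)<R/2$. Every point $\gamma(t)$ satisfies $d(\gamma(t),o)=t\le\ell<R/2$, hence $\gamma$ lies entirely inside $B_{R/2}(o)$, where the gradient bound above is valid. Consequently
$$\bigl|\log v(x)-\log v(o)\bigr|\le\int_0^\ell|\nabla\log v|(\gamma(t))\,dt\le c(m,\tau,\lambda)(1+\sqrt{K}R)\,\frac{\ell}{R}<\tfrac12\,c(m,\tau,\lambda)(1+\sqrt{K}R),$$
where the first inequality is the fundamental theorem of calculus for $t\mapsto\log v(\gamma(t))$ together with $|\langle\nabla\log v,\gamma'\rangle|\le|\nabla\log v|$ (as $\gamma$ is unit speed). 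For arbitrary $x,y\in B_{R/2}(o)$ I would then write $\log v(x)-\log v(y)=\bigl(\log v(x)-\log v(o)\bigr)+\bigl(\log v(o)-\log v(y)\bigr)$ and apply the previous estimate to each difference, obtaining $\log v(x)-\log v(y)<c(m,\tau,\lambda)(1+\sqrt{K}R)$; exponentiating yields $v(x)\le e^{c(\tau,m,\lambda)(1+\sqrt{K}R)}v(y)$.

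I do not expect a genuine obstacle: this is the classical Cheng--Yau derivation of a Harnack inequality from a logarithmic gradient estimate. The only point that really requires attention is that the curve along which one integrates must stay inside $B_{R/2}(o)$, the region where the gradient estimate is available; since a minimizing geodesic joining $x$ and $y$ directly need not remain in $B_{R/2}(o)$ (geodesic balls need not be convex), it is essential to route the integration through the center $o$, along which the two radial segments do stay in $B_{R/2}(o)$.
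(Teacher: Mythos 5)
Your proposal is correct, and it is the same basic argument as the paper's: invoke the logarithmic gradient estimate of Theorem \ref{Int-1}/Corollary \ref{Int-1-3} on $B_{R/2}(o)$ and integrate $|\nabla \ln v|$ along geodesics. The one genuine difference is the choice of path. The paper integrates along a minimizing geodesic $\gamma$ joining $x$ to $y$ directly and uses $\mathrm{length}(\gamma)\le R$; this implicitly assumes that $\gamma$ stays inside $B_{R/2}(o)$, where the gradient bound is available, which need not hold since geodesic balls are not in general convex (one only gets that $\gamma$ stays in $B_R(o)$). You instead concatenate the two radial minimizing geodesics from $o$ to $x$ and from $o$ to $y$, each of which provably remains in $B_{R/2}(o)$, at the harmless cost of a path of length up to $R$ rather than $d(x,y)$. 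So your route is slightly more careful than the paper's and closes a small gap in it; the resulting constant in the exponent is the same up to its (unspecified) dependence on $m$, $\tau$, $\lambda$. One cosmetic remark: the first line of your chain should read $\ln v(x)-\ln v(o)\le \int_0^\ell \langle \nabla \ln v,\gamma'\rangle\,dt \le \int_0^\ell |\nabla\ln v|\,dt$ (the paper writes an equality with $|\nabla\ln v|$, which is also only an inequality); you handle this correctly by taking absolute values.
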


Moreover, using these local gradient estimates, we can obtain the global gradient
estimates for positive solutions to the equation \eqref{zjInt1}.

\begin{theorem}\label{adsec5-1}
Let $(M^{n}, g, e^{-f} d\nu)$ be a complete $n$-dimensional smooth metric measure space with ${\rm Ric}_{f}^{m}\geq-(m-1)Kg$ where $K$ is a non-negative constant. Let $v$ be a global positive solution of \eqref{zjInt1} in $M$.

(1) If $m$, $\lambda$ and $\tau$ satisfy \eqref{Int5}, then
$$\frac{|\nabla v|}{v}\leq\frac{(m-1)\sqrt{K}}{\tau}.$$

(2) If $m$, $\lambda$ and $\tau$ satisfy \eqref{Int6}, then
$$\frac{|\nabla v|}{v}\leq\frac{\sqrt{K}}{\tau\sqrt{\frac12\left(\frac{\sqrt{2}-m-1}{m-1}+\frac{1}{\tau}\right)\left(\frac{\sqrt{2}+m
+1}{m-1}-\frac{1}{\tau}\right)}}.$$
\end{theorem}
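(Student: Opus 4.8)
The plan is to run a maximum-principle argument for the quantity $a:=|\nabla v|^{2}/v^{2}=|\nabla\psi|^{2}$, where $\psi:=\log v$; bounding $\sup_M a$ is exactly what is required. A direct computation from $\Delta_f v^{\tau}+\lambda v=0$ gives the pointwise relations
\[
\Delta_f\psi=-\tau a-b,\qquad \nabla b=(1-\tau)\,b\,\nabla\psi,\qquad b:=\frac{\lambda}{\tau}\,v^{1-\tau},
\]
so that $\operatorname{sign}(b)=\operatorname{sign}(\lambda)$ everywhere. Since $v$ is a \emph{global} solution, applying Theorem \ref{Int-1} on $B_R(o)$ for every $R$ and letting $R\to\infty$ shows that $a$ is bounded on $M$; put $A:=\sup_M a$, and assume $A>0$, since otherwise $v$ is constant and there is nothing to prove.

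Next I would establish the basic Bochner inequality. Applying the weighted Bochner formula to $\psi$, using ${\rm Ric}_f(\nabla\psi,\nabla\psi)\geq-(m-1)Ka+\tfrac{1}{m-n}\langle\nabla f,\nabla\psi\rangle^{2}$ and the $m$-Bakry-\'{E}mery device $|\nabla^{2}\psi|^{2}+\tfrac{1}{m-n}\langle\nabla f,\nabla\psi\rangle^{2}\geq\tfrac{1}{m}(\Delta_f\psi)^{2}$ --- improved to $\tfrac{1}{m-1}\big(\Delta_f\psi-\tfrac{\langle\nabla a,\nabla\psi\rangle}{2a}\big)^{2}$ by choosing an orthonormal frame with first vector $\nabla\psi/|\nabla\psi|$ and discarding the first row of the Hessian --- together with $\langle\nabla\psi,\nabla\Delta_f\psi\rangle=-\tau\langle\nabla\psi,\nabla a\rangle-(1-\tau)ab$, one arrives at
\[
\tfrac12\Delta_f a\;\geq\;\frac{(\tau a+b)^{2}}{m-1}-\tau\langle\nabla\psi,\nabla a\rangle-(1-\tau)ab-(m-1)Ka-\mathcal{E},
\]
where $\mathcal{E}\lesssim|\nabla a|\,(1+|b|)$ collects the errors of the refinement. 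Because $M$ is complete and ${\rm Ric}_f^{m}\geq-(m-1)Kg$, the weighted Laplacian comparison theorem implies the Omori--Yau maximum principle for $\Delta_f$; applied to the bounded function $a$ it produces $x_k\in M$ with $a(x_k)\to A$, $|\nabla a|(x_k)\to0$ and $\Delta_f a(x_k)\leq1/k$. Evaluating the last inequality at $x_k$, the term $\tau\langle\nabla\psi,\nabla a\rangle(x_k)\to0$ and $\mathcal{E}(x_k)\lesssim(1+|b_k|)/k$; writing $a_k:=a(x_k)$, $b_k:=b(x_k)$, $s:=2\tau-(m-1)(1-\tau)=(m+1)\tau-(m-1)$, multiplying by $m-1$ and rearranging yields
\[
\tau^{2}a_k^{2}+b_k^{2}+s\,a_kb_k\;\leq\;(m-1)^{2}Ka_k+\frac{C|b_k|}{k}+o(1).
\]

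It remains to feed in the hypotheses. If \eqref{Int5} holds, then $\tfrac{m+1}{m-1}-\tfrac1\tau=\tfrac{s}{(m-1)\tau}$ together with $\operatorname{sign}(b)=\operatorname{sign}(\lambda)$ forces $s$ and $b_k$ to have the same sign, so $s\,a_kb_k\geq0$ and hence $b_k^{2}+s\,a_kb_k\geq b_k^{2}$; absorbing $C|b_k|/k\leq\tfrac12 b_k^{2}+\tfrac{C^{2}}{2k^{2}}$ into this $b_k^{2}$ leaves $\tau^{2}a_k^{2}\leq(m-1)^{2}Ka_k+o(1)$, and $k\to\infty$ gives $\tau^{2}A\leq(m-1)^{2}K$, i.e. $|\nabla v|/v\leq(m-1)\sqrt{K}/\tau$. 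If instead \eqref{Int6} holds, a short computation shows it is equivalent to $s^{2}\leq2\tau^{2}$; Young's inequality $|s\,a_kb_k|\leq\tfrac12 b_k^{2}+\tfrac{s^{2}}{2}a_k^{2}$ followed by the same absorption of $C|b_k|/k$ into the remaining $\tfrac12 b_k^{2}$ reduces the displayed inequality to $(\tau^{2}-\tfrac{s^{2}}{2})a_k^{2}\leq(m-1)^{2}Ka_k+o(1)$; since $s^{2}\leq2\tau^{2}$ gives $\tau^{2}-\tfrac{s^{2}}{2}\geq0$, with equality only at the endpoints of \eqref{Int6}, letting $k\to\infty$ yields $(\tau^{2}-\tfrac{s^{2}}{2})A\leq(m-1)^{2}K$, which after rewriting $s$ in terms of $\tau$ and $m$ is exactly the asserted bound.

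The crux is the interaction between the noncompactness of $M$ and the non-constant coefficient $b=\tfrac\lambda\tau v^{1-\tau}$: along the Omori--Yau sequence $b_k$ need not stay bounded, so neither the refinement error $\mathcal{E}(x_k)\sim|b_k|/k$ nor the indefinite cross term $s\,a_kb_k$ is obviously harmless. The argument closes because the Bochner inequality automatically carries the favourable term $b^{2}/(m-1)$, and conditions \eqref{Int5} and \eqref{Int6} are precisely the statements ($s\lambda\geq0$, respectively $s^{2}\leq2\tau^{2}$) ensuring that this term controls the cross term and still has room left to swallow the error. Verifying these two equivalences and checking that the $m\mapsto m-1$ refinement of the Bakry-\'{E}mery inequality is legitimate at the near-critical points of $a$ are routine but must be done carefully; one could alternatively extract the same conclusion by tracking the leading-order term in the proof of Theorem \ref{Int-1} and letting $R\to\infty$.
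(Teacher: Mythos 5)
Your proposal is correct, but it takes a genuinely different route from the paper's. The algebra agrees exactly with the paper's: with $s=(m+1)\tau-(m-1)$ one has $\frac{m+1}{m-1}-\frac1\tau=\frac{s}{(m-1)\tau}$, so \eqref{Int5} is equivalent to $\lambda s\geq 0$, hence to $s\,ab\geq0$ pointwise, and \eqref{Int6} is equivalent to $s^{2}\leq 2\tau^{2}$; your limits $A\leq (m-1)^{2}K/\tau^{2}$ and $A\leq (m-1)^{2}K/(\tau^{2}-s^{2}/2)$ coincide with the paper's $y_{1}$ and $y_{2}=(m-1)K/B_{m,\tau}$. The difference lies in how the Bochner-type differential inequality is turned into a global bound. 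You apply the Omori--Yau maximum principle for $\Delta_f$ (valid under ${\rm Ric}_f^m\geq-(m-1)Kg$ via the weighted Laplacian comparison, though you should cite this rather than assert it, since the full version with $|\nabla a|(x_k)\to0$ is what you use) to the bounded function $a$, and you correctly observe that the two dangerous contributions along the maximizing sequence --- the refinement error of order $|\nabla a|(1+|b|)$ and the indefinite cross term $s\,a_kb_k$ --- are absorbed by the spare $b_k^{2}$ produced by the Bochner inequality; the frame-dependent refinement is only invoked where $a$ is near its positive supremum, so it is well defined. The paper instead stays integral: Lemma \ref{adsec5-2} converts the same pointwise inequality into $\Delta_f\omega^{\alpha}\geq2\alpha\omega^{\alpha-1}(l_1\omega-l_2|\nabla\omega|)$ for the truncation $\omega=(h-y_i-\delta)^{+}$, and Lemma \ref{adsec5-3} forces $\omega\equiv0$ by a test-function iteration over balls $B_k$ combined with the weighted volume growth $V_f(B_k)/V_f(B_1)\leq e^{(m-1)k\sqrt{K}}$. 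Both routes require the a priori global boundedness of $h$ obtained from Theorem \ref{Int-1} by letting $R\to\infty$ (you need it to invoke Omori--Yau; the paper needs it to make $l_1,l_2$ finite and $\omega$ bounded). Your argument is shorter and avoids the volume-comparison iteration at the cost of importing the Omori--Yau principle; the paper's is self-contained given its Section 3 machinery.
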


The structure of the paper is organized as follows. In Section 2, we give a meticulous
estimate of $\Delta_f|\nabla u|^{2\alpha}$ at $u = -\ln v$. We also recall the
Saloff-Coste's Sobolev embedding theorem in section 2. In section 3, we establish a universal
integral estimate on $|\nabla \ln v|^{2\alpha}$ and then use delicately the Nash-Moser iteration to prove main
results of this paper. The proof of Corollary \ref{Int-1-4} and Corollary \ref{Int-1-5} is provided
in Section 4. The last section is devoted to the proof of Theorem \ref{adsec5-1}.

\section{Preliminary}

In this section we need some preliminary results to prove the gradient estimate in \eqref{Int10} of Theorem \ref{Int-1}.

We observe that equation \eqref{zjInt1} is rewritten as follows:
\begin{equation}\label{add1sec2-1}
\Delta_fv+(\tau -1)v^{-1}|\nabla v|^2+\tau^{-1}\lambda v^{2-\tau}=0.
\end{equation}
Now, let $u=-\ln v$. Then with respect to \eqref{add1sec2-1}, $u$ satisfies the following nonlinear elliptic equation:
\begin{equation}\label{sec2-1}
\Delta_fu-\tau |\nabla u|^2-\tau^{-1}\lambda e^{(\tau -1)u}=0.
\end{equation}
Set $h=|\nabla u|^2.$ Then we have
\begin{equation}\label{sec2-2}
\Delta_fu-\tau h-\tau^{-1}\lambda e^{(\tau -1)u}=0.
\end{equation}

Firstly, we need to establish the following lemma.

\begin{lemma}\label{lem2-2}
Let $(M^{n}, g, e^{-f} d\nu)$ be a complete $n$-dimensional smooth metric measure space with ${\rm Ric}_{f}^{m}\geq-(m-1)Kg$ where $K$ is a non-negative constant and $m>2$. Assume that $v$ is a somooth positive solution to the equation \eqref{zjInt1}. If $m$, $\lambda$, $\tau\in W_1\cup W_2$, where
$$\aligned
&W_{1}=\left\{(m,\lambda ,\tau):\lambda \bigg(\frac{m+1}{m-1}-\frac1\tau \bigg)\geq 0\right\};
\\&W_{2}=\left\{(m,\lambda ,\tau):1-\frac{2+\sqrt{2}}{m+1+\sqrt{2}}\leq \tau \leq 1-\frac{2-\sqrt{2}}{m+1-\sqrt{2}}\right\},
\endaligned$$
then there exists constant $\alpha_0>1$ and some $\beta_0>0$ such that the following inequality
\begin{equation}\label{sec2-15}
\Delta_fh^{\alpha_0}\geq 2\alpha_{0}\beta_0h^{\alpha_{0}+1}-2\alpha_{0}(m-1)K h^{\alpha_{0}}-\alpha_{0}a_{1}|\nabla h|h^{\alpha_{0}-\frac12}
\end{equation}
holds point-wisely in $\{x:h(x)>0\}$, where $a_1=(2-\frac{2}{m-1})\tau$ and $\alpha_{0}$, $\beta_{0}$ depend only on $m$, $\lambda$ and $\tau$. The $\alpha_{0}$ and $\beta_{0}$ may be defined differently in different $W_{1}$ and $W_{2}$.
\end{lemma}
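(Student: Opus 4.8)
The plan is to run a weighted Bochner estimate for $h=|\nabla u|^2$, $u=-\ln v$, and then pass from a differential inequality for $h$ to one for $h^{\alpha_0}$. Fix a point with $h>0$, so that $\nabla u\neq 0$ and $h$ is smooth near it, and choose an orthonormal frame with $e_1=\nabla u/|\nabla u|$. Starting from the weighted Bochner formula
$$\tfrac12\Delta_f h=|\nabla^2 u|^2+\langle\nabla u,\nabla\Delta_f u\rangle+\mathrm{Ric}_f(\nabla u,\nabla u),$$
I would improve the Hessian term by the directional refinement $|\nabla^2 u|^2\ge u_{11}^2+\tfrac{1}{n-1}(\Delta u-u_{11})^2$ (recording the identities $\langle\nabla u,\nabla h\rangle=2hu_{11}$ and $u_{11}^2\le|\nabla h|^2/(4h)$), and handle the extra $f$-terms by the Cauchy--Schwarz trick $\tfrac{(\Delta u-u_{11})^2}{n-1}+\tfrac{\langle\nabla f,\nabla u\rangle^2}{m-n}\ge\tfrac{(\Delta_f u-u_{11})^2}{m-1}$, which trades $\mathrm{Ric}_f,\Delta u$ for $\mathrm{Ric}_f^m,\Delta_f u$; feeding in $\mathrm{Ric}_f^m\ge-(m-1)Kg$ gives
$$\tfrac12\Delta_f h\ge u_{11}^2+\tfrac{(\Delta_f u-u_{11})^2}{m-1}+\langle\nabla u,\nabla\Delta_f u\rangle-(m-1)Kh.$$

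Next I would insert equation \eqref{sec2-2} in the form $\Delta_f u=\tau h+\phi$ with $\phi:=\tau^{-1}\lambda e^{(\tau-1)u}$, so that $\langle\nabla u,\nabla\Delta_f u\rangle=\tau\langle\nabla u,\nabla h\rangle+(\tau-1)\phi h$. Expanding $(\Delta_f u-u_{11})^2=(\tau h+\phi-u_{11})^2$ and collecting the terms linear in $u_{11}$, the contribution $-\tfrac{2\tau h}{m-1}u_{11}$ from the expansion combines with $\tau\langle\nabla u,\nabla h\rangle=2\tau h\,u_{11}$ to leave an effective coefficient $\bigl(1-\tfrac{1}{m-1}\bigr)\tau$ on $\langle\nabla u,\nabla h\rangle$; estimating that one term by $|\langle\nabla u,\nabla h\rangle|\le h^{1/2}|\nabla h|$ produces precisely the gradient term with $a_1=\bigl(2-\tfrac{2}{m-1}\bigr)\tau$. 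The quadratic in $u_{11}$ that remains opens upward and is minimized explicitly; the leftover involves only $h$ and $\phi$, and after simplification one arrives at
$$\tfrac12\Delta_f h\ge\tfrac{\tau^2}{m-1}\,h^2+\tfrac{1}{m}\,\phi^2+c\,\phi h-(m-1)Kh-\tfrac{a_1}{2}h^{1/2}|\nabla h|,\qquad c:=\tfrac{m+1}{m-1}\tau-1.$$

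The decisive step is disposing of the $\phi$-terms, which is delicate since $\phi$ is sign-indefinite and $u$-dependent: they occur only through $\tfrac1m\phi^2+c\,\phi h$, a nonnegative square plus one sign-indefinite term. Here $\mathrm{sign}(c)=\mathrm{sign}\bigl(\tfrac{m+1}{m-1}-\tfrac1\tau\bigr)$ while $\mathrm{sign}(\phi)=\mathrm{sign}(\lambda)$, so on $W_1$ the product $c\,\phi h$ is nonnegative and may simply be dropped, giving $\beta_0=\tfrac{\tau^2}{m-1}$; on $W_2$ (where the sign of $c$ is unfavorable) I would instead complete the square, $\tfrac1m\phi^2+c\phi h\ge-\tfrac{m}{4}c^2h^2$ (or a marginally lossier bound if the bookkeeping demands it), and the residual $h^2$-coefficient $\tfrac{\tau^2}{m-1}-\tfrac m4 c^2$ is strictly positive exactly on the stated $\tau$-interval, which yields a positive $\beta_0$. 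In either case one obtains $\tfrac12\Delta_f h\ge\beta_0 h^2-(m-1)Kh-\tfrac{a_1}{2}h^{1/2}|\nabla h|$; finally, for any fixed $\alpha_0>1$,
$$\Delta_f h^{\alpha_0}=\alpha_0 h^{\alpha_0-1}\Delta_f h+\alpha_0(\alpha_0-1)h^{\alpha_0-2}|\nabla h|^2\ge\alpha_0 h^{\alpha_0-1}\Delta_f h,$$
and multiplying the Bochner inequality by $\alpha_0 h^{\alpha_0-1}$ gives \eqref{sec2-15}. I expect the $\phi$-bookkeeping — checking that every exponential contribution genuinely packages into $\tfrac1m\phi^2+c\,\phi h$, and that its sign/size analysis under $W_1\cup W_2$ leaves a strictly positive coefficient on $h^{\alpha_0+1}$ — to be the main obstacle, with the sharp tracking of the constant $a_1$ a close second.
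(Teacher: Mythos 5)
Your proposal is correct and follows essentially the same route as the paper: Bochner formula in the frame $e_1=\nabla u/|\nabla u|$, the $\delta=(m-n)/(n-1)$ Cauchy--Schwarz trick to pass to $\mathrm{Ric}_f^m$ and $\Delta_f u$, absorption of the $\phi\,u_{11}$ cross term into the $u_{11}^2$ term, and the same $W_1$ (sign) versus $W_2$ (complete the square) dichotomy, with your order of operations (prove the $\alpha=1$ inequality first, then multiply by $\alpha_0 h^{\alpha_0-1}$ and drop the nonnegative $(\alpha_0-1)|\nabla h|^2$ term) a harmless rearrangement of the paper's carrying $\alpha$ throughout. The one substantive deviation is on $W_2$, where you complete the square against the full residual $\frac1m\phi^2$ rather than the paper's $\frac{1}{2(m-1)}\phi^2$, yielding the better coefficient $\frac{\tau^2}{m-1}-\frac{m}{4}c^2$ — strictly positive on all of $W_2$ for $m>2$, including the endpoints where the paper's $B_{m,\tau}$ vanishes — though your phrase ``strictly positive exactly on the stated $\tau$-interval'' is imprecise, since your condition in fact holds on a strictly larger set (which only helps).
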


\begin{proof}  For any $\alpha>1$, direct computation gives
$$\Delta_fh^{\alpha}=\alpha (\alpha -1)h^{\alpha -2}|\nabla h|^2+\alpha h^{\alpha -1}\Delta_fh.$$
Using the Bochner formula
$$\frac{1}{2}\Delta_fh=|\nabla^2u|^2+\langle\nabla u,\nabla\Delta_fu\rangle+\mathrm{Ric}_f(\nabla u,\nabla u),$$
we obtain
\begin{equation}\label{sec2-3}
\Delta_fh^{\alpha}=\alpha (\alpha -1)h^{\alpha -2}|\nabla h|^2+2\alpha h^{\alpha -1}\left(|\nabla^2u|^2+\langle\nabla u,\nabla\Delta_fu\rangle+\mathrm{Ric}_f(\nabla u,\nabla u)\right).
\end{equation}

Let $\{e_1,e_2,\ldots,e_n\}$ be an orthonormal frame of $TM$ on a domain with $h\neq0$ such that
$e_{1}=\frac{\nabla u}{|\nabla u|}$. If we denote $\nabla u =\Sigma_{i=1}^{n}u_{i}e_i$, it is easy to see
$u_1=|\nabla u|=h^{1/2}$ and $u_{i}=0$ for any $2 \leq i \leq n$.
Then the following identities hold (see (3.1) and (3.3) in \cite{HeWangWei2024}):
\begin{equation}\label{sec2-4}
u_{11}=\frac{1}{2}h^{-1}\langle\nabla u,\nabla h\rangle,
\end{equation}
\begin{equation}\label{sec2-5}
\sum_{i=1}^nu_{1i}^2=\frac{1}{4}h^{-1}|\nabla h|^2,
\end{equation}
\begin{equation}\label{sec2-9}
\sum_{i=2}^{n}u_{ii}=\tau h+\tau^{-1}\lambda e^{(\tau -1)u}-u_{11}+\langle\nabla f,\nabla u\rangle.
\end{equation}

Using the Cauchy inequality, we get
\begin{equation}\label{sec2-7}\aligned
|\nabla^2 u|^2\geq& \sum_{i=1}^nu_{1i}^2+\sum_{i=2}^{n}u_{ii}^2\\
\geq& u_{11}^2+\frac1{n-1}\left(\sum_{i=2}^{n}u_{ii}\right)^2.
\endaligned\end{equation}
Then using the inequality (see (5) in \cite{BarrosA2018})
$$(a+b)^2\geq \frac{a^2}{1+\delta}-\frac{b^2}{\delta},\quad \delta=\frac{m-n}{n-1},$$
we can derive from \eqref{sec2-9} that
\begin{equation}\label{sec2-10}\aligned
\frac{1}{n-1}\left(\sum_{i=2}^{n}u_{ii}\right)^2=&\frac{1}{n-1}\left(\tau h+\tau^{-1}\lambda e^{(\tau -1)u}-u_{11}+\langle\nabla f,\nabla u\rangle\right)^2\\
\geq&\frac{1}{n-1}\bigg(\frac{n-1}{m-1}\left(\tau h+\tau^{-1}\lambda e^{(\tau -1)u}-u_{11}\right)^2-\frac{n-1}{m-n}\langle\nabla f,\nabla u\rangle^2\bigg)\\
=&\frac{1}{m-1}(\tau h+\tau^{-1}\lambda e^{(\tau-1)u}-u_{11})^2-\frac{1}{m-n}\langle\nabla f,\nabla u\rangle^2\\
=&\frac{1}{m-1}\big(\tau^2 h^2+\tau^{-2}\lambda^2e^{2(\tau-1)u}+u_{11}^2+2\lambda he^{(\tau-1)u}\\
&-2\tau hu_{11}-2\tau^{-1}\lambda e^{(\tau-1)u}u_{11}\big)-\frac{1}{m-n}\langle\nabla f,\nabla u\rangle^2.
\endaligned\end{equation}
Then we obtain
\begin{equation}\label{add1sec2-7}\aligned
|\nabla^2 u|^2\geq& u_{11}^2+\frac{1}{m-1}\big(\tau^2 h^2+\tau^{-2}\lambda^2e^{2(\tau-1)u}+u_{11}^2+2\lambda he^{(\tau-1)u}\\
&-2\tau hu_{11}-2\tau^{-1}\lambda e^{(\tau-1)u}u_{11}\big)-\frac{1}{m-n}\langle\nabla f,\nabla u\rangle^2.
\endaligned\end{equation}
We can derive from \eqref{sec2-2} that
\begin{equation}\label{sec2-6}\aligned
\langle\nabla u,\nabla\Delta_fu\rangle&=\langle\nabla u,\nabla(\tau h+\tau^{-1}\lambda e^{(\tau-1)u})\rangle\\
&=\tau\langle\nabla u,\nabla h\rangle+\tau^{-1}(\tau-1)\lambda e^{(\tau-1)u}h.
\endaligned
\end{equation}
Substituting \eqref{sec2-4}, \eqref{sec2-5}, \eqref{sec2-6} and \eqref{add1sec2-7} into \eqref{sec2-3}, we obtain
\begin{equation}\label{sec2-11}\aligned
\frac{h^{1-\alpha}}{2\alpha}\Delta_fh^{\alpha}\geq&\left(2\alpha-1+\frac{1}{m-1}\right)u_{11}^2+\left(2-\frac{2}{m-1}\right)\tau hu_{11}\\
&+\mathrm{Ric}_f^m(\nabla u,\nabla u)+\frac{1}{m-1}\tau^2h^2+\frac{1}{m-1}\tau^{-2}\lambda^2e^{2(\tau-1)u}\\
&+\lambda \left(\frac{m+1}{m-1}-\frac{1}{\tau}\right)he^{(\tau-1)u}-\frac{2}{m-1}\tau^{-1}\lambda e^{(\tau-1)u}u_{11}.
\endaligned\end{equation}
By applying the inequality $a^2+2ab\geq -b^2$, we have
\begin{equation}\label{sec2-12}\aligned
\left(2\alpha-1+\frac{1}{m-1}\right)u_{11}^2-\frac{2}{m-1}\tau^{-1}\lambda e^{(\tau-1)u}u_{11}\\
\geq -\frac{1}{\big((2\alpha-1)(m-1)+1\big)(m-1)}\tau^{-2}\lambda^2 e^{2(\tau-1)u}.
\endaligned\end{equation}
By \eqref{sec2-12} and the assumption that ${\rm Ric}_{f}^{m}\geq-(m-1)Kg$, we obtain
\begin{equation}\label{sec2-13}\aligned
\frac{h^{1-\alpha}}{2\alpha}\Delta_fh^{\alpha}\geq&\left(2-\frac{2}{m-1}\right)\tau hu_{11}-(m-1)Kh+\frac{1}{m-1}\tau^2h^2\\
&+\frac{1}{m-1}\tau^{-2}\lambda^2e^{2(\tau-1)u}+\lambda\left(\frac{m+1}{m-1}-\frac{1}{\tau}\right)he^{(\tau-1)u}\\
&-\frac{1}{\big((2\alpha-1)(m-1)+1\big)(m-1)}\tau^{-2}\lambda^2 e^{2(\tau-1)u}.
\endaligned\end{equation}

To estimate $\Delta_fh^{\alpha}$, we divide the arguments into two cases:

(1) If ($m$, $\lambda$, $\tau$) $\in$  $W_1$, we have
$$\lambda \left(\frac{m+1}{m-1}-\frac{1}{\tau}\right)he^{(\tau-1)u}\geq 0.$$
Furthermore, we observe that
$$\bigg(\frac{1}{m-1}-\frac{1}{\big((2\alpha-1)(m-1)+1\big)(m-1)}\bigg)\tau^{-2}\lambda^2 e^{2(\tau-1)u}\geq 0.$$
Thus, we can always find an $\alpha=\alpha (m,\tau ,\lambda)$ large enough such that
$$\frac{h^{1-\alpha}}{2\alpha}\Delta_fh^{\alpha}\geq\frac{1}{m-1}\tau^2h^2+\left(2-\frac{2}{m-1}\right)\tau hu_{11}-(m-1)Kh.$$

(2) If ($m$, $\lambda$, $\tau$) $\in$  $W_2$, we have
\begin{equation}\label{sec2-14}
\frac{1}{m-1}-\frac{m-1}{2}\left(\frac{m+1}{m-1}-\frac{1}{\tau}\right)^2\geq 0.
\end{equation}
\eqref{sec2-13} can be rewritten as follows:
\begin{equation}\label{add1sec2-13}\aligned
\frac{h^{1-\alpha}}{2\alpha}\Delta_fh^{\alpha}\geq&\left(2-\frac{2}{m-1}\right)\tau hu_{11}-(m-1)Kh+\frac{1}{m-1}\tau^2h^2\\
&+\left(\frac{1}{2(m-1)}\tau^{-2}\lambda^2e^{2(\tau-1)u}+\lambda\left(\frac{m+1}{m-1}-\frac{1}{\tau}\right)he^{(\tau-1)u}\right)\\
&+\bigg(\frac{1}{2(m-1)}-\frac{1}{\big((2\alpha-1)(m-1)+1\big)(m-1)}\bigg)\tau^{-2}\lambda^2 e^{2(\tau-1)u}.
\endaligned\end{equation}
According to $a^2+2ab\geq -b^2$, we get
\begin{equation}\label{add2sec2-13}\aligned
&\frac{1}{2(m-1)}\tau^{-2}\lambda^2e^{2(\tau-1)u}+\lambda\left(\frac{m+1}{m-1}-\frac{1}{\tau}\right)he^{(\tau-1)u}\\
\geq& -\frac{m-1}{2}\left(\frac{m+1}{m-1}-\frac{1}{\tau}\right)^2\tau^2h^2.
\endaligned\end{equation}
Meanwhile, we note that
\begin{equation}\label{add3sec2-13}
\bigg(\frac{1}{2(m-1)}-\frac{1}{((2\alpha-1)(m-1)+1)(m-1)}\bigg)\tau^{-2}\lambda^2 e^{2(\tau-1)u}\geq 0.
\end{equation}
Let
$$B_{m,\tau}=\left(\frac{1}{m-1}-\frac{m-1}{2}\left(\frac{m+1}{m-1}-\frac{1}{\tau}\right)^2\right)\tau^2.$$
Using \eqref{sec2-14}, \eqref{add1sec2-13}, \eqref{add2sec2-13} and \eqref{add3sec2-13}, we can always find an $\alpha=\alpha (m,\tau)$ large enough such that
$$\frac{h^{1-\alpha}}{2\alpha}\Delta_fh^{\alpha}\geq B_{m,\tau}h^2+\left(2-\frac{2}{m-1}\right)\tau hu_{11}-(m-1)Kh.$$

We denote
\begin{equation}\label{add4sec2-40}
\frac{h^{1-\alpha}}{2\alpha}\Delta_fh^{\alpha}\geq B_{m,\tau}h^2+\left(2-\frac{2}{m-1}\right)\tau hu_{11}-(m-1)Kh.
\end{equation}
and
$$\beta_0:=\beta_0(m,\tau)=\begin{cases}\frac{\tau^2}{m-1},&(m,\lambda ,\tau)\text{ lies in }W_1;\\B_{m,\tau},&(m,\lambda ,\tau)\text{ lies in }W_2\setminus W_1.\end{cases}$$
Thus we finish the proof.
\end{proof}

To prove Theorem \ref{Int-1}, we also need the
Saloff-Coste's Sobolev embedding theorem.

\begin{lemma}\label{lem2-0} (\cite{Saloff1992, Dung2016, Zhao18})
Let $(M^{n}, g, e^{-f} d\nu)$ be a complete $n$-dimensional smooth metric measure space with ${\rm Ric}_{f}^{m}\geq-(m-1)Kg$ where $K$ is a non-negative constant. For $m>2$, there exists some positive constant $c(m)$ depending only on $m$, such that for all $B\subset M$ of radius $R$, we have for $\phi\in C_0^\infty (B)$
$$\left(\int_B|\phi|^{\frac{2m}{m-2}}e^{-f}d\nu\right)^{\frac{m-2}{m}}\leq e^{c(m)(1+\sqrt{K}R)}V_f^{-\frac{2}{m}}R^2\left(\int_B|\nabla \phi|^2+R^{-2}\phi^2\right)e^{-f}d\nu,$$
where $V_f=\int_Be^{-f}d\nu$.
\end{lemma}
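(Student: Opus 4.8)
The plan is to obtain this weighted Sobolev inequality from two more basic ingredients --- a \emph{weighted volume doubling property} and a \emph{weighted (Neumann) Poincar\'{e} inequality} on balls --- and then to invoke the equivalence, due to Grigor'yan and Saloff-Coste, between these two properties and a family of scale-invariant local Sobolev inequalities of precisely the displayed shape. The structural point is that under ${\rm Ric}_f^m \ge -(m-1)Kg$ the weighted measure $e^{-f}d\nu$ plays the role of an $m$-dimensional measure as far as comparison geometry is concerned, so the Sobolev exponent is $\tfrac{2m}{m-2}$ and all constants can be arranged to depend only on $m$ together with the scale-free quantity $\sqrt{K}R$.

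\emph{Step 1: weighted volume doubling.} Under ${\rm Ric}_f^m \ge -(m-1)Kg$ the generalized Bishop-Gromov comparison (Wei-Wylie, Bakry-Qian, Qian) asserts that for each $x$ the ratio $r \mapsto V_f(B_r(x))/\mathcal{V}_K^m(r)$ is non-increasing, where $\mathcal{V}_K^m(r)$ is the volume of a ball of radius $r$ in the $m$-dimensional model space of constant curvature $-K$. Since $\mathcal{V}_K^m(r)/\mathcal{V}_K^m(s)\le e^{c(m)(1+\sqrt{K}R)}(r/s)^m$ for $0<s\le r\le R$, this yields
$$
V_f(B_r(x))\le e^{c(m)(1+\sqrt{K}R)}\left(\frac rs\right)^m V_f(B_s(x)),\qquad 0<s\le r\le R,
$$
that is, the local $m$-dimensional doubling property, the factor $e^{c(m)(1+\sqrt{K}R)}$ being exactly the ``defect'' that reappears in the final estimate.

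\emph{Step 2: weighted Poincar\'{e} inequality.} Under the same curvature bound one needs, for $\phi\in C^\infty(B_R(x))$,
$$
\int_{B_R(x)}\bigl|\phi-\phi_{B_R}\bigr|^2 e^{-f}d\nu\le C(m)\,e^{c(m)(1+\sqrt{K}R)}\,R^2\int_{B_R(x)}|\nabla\phi|^2 e^{-f}d\nu,
$$
where $\phi_{B_R}$ is the $e^{-f}d\nu$-average of $\phi$ over $B_R(x)$. This is the weighted Buser inequality; it can be produced either from the Li-Yau type gradient bound for the weighted heat semigroup $e^{t\Delta_f}$ (valid with ``dimension'' $m$ under ${\rm Ric}_f^m\ge-(m-1)Kg$, via the curvature-dimension condition $\mathrm{CD}(-(m-1)K,m)$), or by a direct segment-integration argument in the spirit of Buser using the $m$-Bakry-\'{E}mery mean-curvature comparison for the distance spheres $\partial B_r$ (which controls $\Delta_f r$).

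\emph{Step 3: conclusion.} Granting Steps 1 and 2, the theorem follows from the Grigor'yan-Saloff-Coste machinery --- in the weighted setting this is precisely what is carried out in the references of Dung and Zhao cited in the statement: doubling of order $m$ together with the scale-$R$ Poincar\'{e} inequality imply, for $\phi\in C_0^\infty(B_R)$,
$$
\left(\int_{B_R}|\phi|^{\frac{2m}{m-2}}e^{-f}d\nu\right)^{\frac{m-2}{m}}\le e^{c(m)(1+\sqrt{K}R)}\,V_f^{-\frac2m}R^2\int_{B_R}\left(|\nabla\phi|^2+R^{-2}\phi^2\right)e^{-f}d\nu,
$$
which is exactly the asserted inequality. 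The main obstacle is one of bookkeeping rather than of ideas: one must check that every constant entering the doubling property, the Poincar\'{e} inequality, and Saloff-Coste's iteration depends on $m$ and $\sqrt{K}R$ \emph{only} --- in particular not on $\sup|f|$, $\inf f$, or $n$ --- which is exactly why the argument must proceed throughout through the $m$-Bakry-\'{E}mery comparison geometry for the measure $e^{-f}d\nu$, rather than absorbing the weight into the metric (impossible once $f$ is unbounded) or falling back on the unweighted $n$-dimensional Riemannian Sobolev inequality.
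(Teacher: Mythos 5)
The paper offers no proof of this lemma at all: it is quoted as a known result from the cited references (Saloff-Coste, Dung--Dat, Zhao--Yang), so there is no in-paper argument to compare against. Your outline --- $m$-Bakry--\'Emery Bishop--Gromov comparison giving weighted volume doubling of order $m$ with defect $e^{c(m)(1+\sqrt{K}R)}$, a weighted Buser/Neumann--Poincar\'e inequality at scale $R$, and the Grigor'yan--Saloff-Coste equivalence producing the scale-invariant local Sobolev inequality with exponent $\frac{2m}{m-2}$ --- is precisely the route carried out in those references, and the point you single out as the real work (that every constant must depend only on $m$ and $\sqrt{K}R$, never on $n$ or on bounds for $f$, which forces the argument through the $m$-dimensional comparison geometry of the measure $e^{-f}d\nu$) is the correct thing to verify. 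The proposal is a sound, if deliberately high-level, reconstruction of the standard proof.
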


\section{Proof of Theorem \ref{Int-1}}
In this section, we finish the proof of Theorem \ref{Int-1}.
We proceed with following integral inequality on the solutions to equation \eqref{zjInt1}:

\begin{lemma}\label{lem3-1}
Let $(M^{n}, g, e^{-f} d\nu)$ be a complete $n$-dimensional smooth metric measure space with ${\rm Ric}_{f}^{m}\geq-(m-1)Kg$ where $K$ is a non-negative constant and $\Omega=B_R(o)\subset M$ be a geodesic ball. Suppose that $v$ is a positive solution to the equation \eqref{zjInt1} with the constants $m>2$, $\lambda$ and $\tau$ which satisfy the condition in Theorem \ref{Int-1} (or Corollary \ref{Int-1-3}), $u= -\ln v$ and $h=|\nabla u|^2$. Then there exist constants $a_2$, $a_3$ and $a_4$ depending only on $m$, $\lambda$ and $\tau$ such that for any $t\geq t_0,$ where $t_0$ is defined in \eqref{sec3-12}, there holds
\begin{equation}\label{sec3-14}\aligned
&\beta_{0}\int_{\Omega}h^{\alpha_{0}+t+1}\eta^{2}e^{-f}d\nu+\frac{a_{2}}{t}e^{-t_{0}}V_f^{\frac{2}{m}}R^{-2}
\left(\int_{\Omega}|h^{\frac{\alpha_{0}+t}{2}}\eta|^{\frac{2m}{m-2}}e^{-f}d\nu\right)^{\frac{m-2}{m}}\\
\leq&\:a_{4}t_{0}^{2}R^{-2}\int_{\Omega}h^{\alpha_{0}+t}\eta^{2}e^{-f}d\nu+\frac{a_{3}}{t}\int_{\Omega}h^{\alpha_{0}+t}|\nabla\eta|^{2}e^{-f}d\nu,
\endaligned\end{equation}
where $\eta \in C_0^\infty (\Omega ,\mathbb{R}).$
\end{lemma}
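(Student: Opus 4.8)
The plan is to convert the pointwise inequality \eqref{sec2-15} of Lemma \ref{lem2-2} into an integral one by testing it against $h^{t}\eta^{2}$, and then to feed the resulting Dirichlet-type term into the Sobolev inequality of Lemma \ref{lem2-0}. First I would multiply \eqref{sec2-15} by $h^{t}\eta^{2}e^{-f}$, taking $t\geq 1$ so that no negative power of $h$ appears (the regularity of $h^{\alpha_{0}}$ along $\{h=0\}$ and the fact that \eqref{sec2-15} holds only on $\{h>0\}$ cause no difficulty, by a standard approximation argument, e.g.\ replacing $h$ by $h+\varepsilon$ and letting $\varepsilon\downarrow 0$ at the end), and integrate over $\Omega$. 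Using the weighted Green identity $\int_{\Omega} h^{t}\eta^{2}\Delta_{f}h^{\alpha_{0}}e^{-f}d\nu=-\int_{\Omega}\langle\nabla(h^{t}\eta^{2}),\nabla h^{\alpha_{0}}\rangle e^{-f}d\nu$ together with $\nabla(h^{t}\eta^{2})=t h^{t-1}\eta^{2}\nabla h+2h^{t}\eta\nabla\eta$ and $\nabla h^{\alpha_{0}}=\alpha_{0}h^{\alpha_{0}-1}\nabla h$, one obtains after rearranging
\begin{equation*}
\aligned
&2\alpha_{0}\beta_{0}\int_{\Omega} h^{\alpha_{0}+t+1}\eta^{2}e^{-f}d\nu+t\alpha_{0}\int_{\Omega} h^{\alpha_{0}+t-2}\eta^{2}|\nabla h|^{2}e^{-f}d\nu\\
\leq{}&2\alpha_{0}(m-1)K\int_{\Omega} h^{\alpha_{0}+t}\eta^{2}e^{-f}d\nu+\alpha_{0}a_{1}\int_{\Omega}|\nabla h|\,h^{\alpha_{0}+t-\frac{1}{2}}\eta^{2}e^{-f}d\nu\\
&-2\alpha_{0}\int_{\Omega} h^{\alpha_{0}+t-1}\eta\langle\nabla\eta,\nabla h\rangle e^{-f}d\nu.
\endaligned
\end{equation*}
The key structural feature is that the ``good'' gradient term on the left carries the large factor $t$.

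Next I would absorb the two bad terms on the right. Splitting $|\nabla h|\,h^{\alpha_{0}+t-\frac{1}{2}}=\big(h^{\frac{\alpha_{0}+t-2}{2}}|\nabla h|\big)\big(h^{\frac{\alpha_{0}+t+1}{2}}\big)$ and using Young's inequality with a weight proportional to $t$, the $a_{1}$-term is bounded by $\frac{1}{4}t\alpha_{0}\int h^{\alpha_{0}+t-2}\eta^{2}|\nabla h|^{2}e^{-f}d\nu$ plus a multiple of $\frac{a_{1}^{2}}{t}\int h^{\alpha_{0}+t+1}\eta^{2}e^{-f}d\nu$, and the latter is $\leq\alpha_{0}\beta_{0}\int h^{\alpha_{0}+t+1}\eta^{2}e^{-f}d\nu$ once $t$ exceeds a threshold of the form $c\,a_{1}^{2}/\beta_{0}$. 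Likewise the $\nabla\eta$ cross term is, by Young's inequality, bounded by $\frac{1}{4}t\alpha_{0}\int h^{\alpha_{0}+t-2}\eta^{2}|\nabla h|^{2}e^{-f}d\nu$ plus a multiple of $\frac{1}{t}\int h^{\alpha_{0}+t}|\nabla\eta|^{2}e^{-f}d\nu$. What survives on the left is then $\alpha_{0}\beta_{0}\int h^{\alpha_{0}+t+1}\eta^{2}e^{-f}d\nu+\frac{1}{2}t\alpha_{0}\int h^{\alpha_{0}+t-2}\eta^{2}|\nabla h|^{2}e^{-f}d\nu$, bounded by the curvature term together with a $\frac{1}{t}\int h^{\alpha_{0}+t}|\nabla\eta|^{2}e^{-f}d\nu$-term.

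To bring in the Sobolev norm I would use $|\nabla(h^{\frac{\alpha_{0}+t}{2}}\eta)|^{2}\leq\frac{(\alpha_{0}+t)^{2}}{2}h^{\alpha_{0}+t-2}\eta^{2}|\nabla h|^{2}+2h^{\alpha_{0}+t}|\nabla\eta|^{2}$, so that the retained $\frac{1}{2}t\alpha_{0}\int h^{\alpha_{0}+t-2}\eta^{2}|\nabla h|^{2}e^{-f}d\nu$ dominates a positive multiple of $\frac{\alpha_{0}t}{(\alpha_{0}+t)^{2}}\int|\nabla(h^{\frac{\alpha_{0}+t}{2}}\eta)|^{2}e^{-f}d\nu$ minus a further $\frac{1}{t}\int h^{\alpha_{0}+t}|\nabla\eta|^{2}e^{-f}d\nu$-term; since $t\geq\alpha_{0}$ one has $\frac{t}{(\alpha_{0}+t)^{2}}\geq\frac{1}{4t}$, so this controls from below a positive multiple of $\frac{\alpha_{0}}{t}\int|\nabla(h^{\frac{\alpha_{0}+t}{2}}\eta)|^{2}e^{-f}d\nu$. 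Applying Lemma \ref{lem2-0} to $\phi=h^{\frac{\alpha_{0}+t}{2}}\eta$ (which is Lipschitz with compact support in $\Omega$, hence admissible by density) bounds $\int|\nabla\phi|^{2}e^{-f}d\nu$ from below by $e^{-c(m)(1+\sqrt{K}R)}V_{f}^{2/m}R^{-2}\big(\int|\phi|^{\frac{2m}{m-2}}e^{-f}d\nu\big)^{\frac{m-2}{m}}-R^{-2}\int\phi^{2}e^{-f}d\nu$. It then suffices to take $t_{0}$ (the quantity named in the statement) to be a constant depending only on $m,\lambda,\tau$ times $1+\sqrt{K}R$, large enough that all the above absorption thresholds hold, that $e^{-t_{0}}\leq e^{-c(m)(1+\sqrt{K}R)}$, and that $t_{0}^{2}R^{-2}\geq 2\alpha_{0}(m-1)K$; with $t\geq t_{0}$ this lets the curvature term $2\alpha_{0}(m-1)K\int h^{\alpha_{0}+t}\eta^{2}e^{-f}d\nu$ and the leftover $R^{-2}\int\phi^{2}e^{-f}d\nu$ both be absorbed into a single term $a_{4}t_{0}^{2}R^{-2}\int h^{\alpha_{0}+t}\eta^{2}e^{-f}d\nu$. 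Collecting everything and renaming constants — each a function of $m,\lambda,\tau$ alone, since $\alpha_{0},\beta_{0},a_{1}$ and $c(m)$ are — and using $\beta_{0}\leq\alpha_{0}\beta_{0}$ gives \eqref{sec3-14}.

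The main obstacle is precisely this choice of $t_{0}$ and the attendant absorption: the single parameter $t$ must be large enough for the first-order term $\alpha_{0}a_{1}|\nabla h|\,h^{\alpha_{0}+t-\frac{1}{2}}$ to be split so as to be simultaneously swallowed by a fixed fraction of the good zeroth-order term $h^{\alpha_{0}+t+1}$ and by a fixed fraction of the gradient term $t\,h^{\alpha_{0}+t-2}|\nabla h|^{2}$ (the latter being what powers the Sobolev step), while the exponential Sobolev constant $e^{c(m)(1+\sqrt{K}R)}$ forces $t_{0}$ to be of order $1+\sqrt{K}R$ rather than an absolute constant — which is why the factor $e^{-t_{0}}$, and not $1$, multiplies the Sobolev term in \eqref{sec3-14}. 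Everything else is routine bookkeeping of constants.
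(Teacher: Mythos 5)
Your proposal is correct and follows essentially the same route as the paper: test the differential inequality of Lemma \ref{lem2-2} against $h^{t}\eta^{2}$ (with a regularization near $\{h=0\}$; the paper uses $(h-\epsilon)^{+}$ where you use $h+\varepsilon$), absorb the first-order and cross terms by Young's inequality using the large factor $t$, pass to $|\nabla(h^{\frac{\alpha_{0}+t}{2}}\eta)|^{2}$, invoke the Saloff-Coste Sobolev inequality, and choose $t_{0}=c_{m,\lambda,\tau}(1+\sqrt{K}R)$ so that the curvature term and the leftover $R^{-2}\int\phi^{2}$ term are absorbed into $a_{4}t_{0}^{2}R^{-2}\int h^{\alpha_{0}+t}\eta^{2}$. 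The bookkeeping of constants matches the paper's choices \eqref{sec3-5}, \eqref{sec3-9}, \eqref{sec3-12}--\eqref{sec3-13}.
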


\begin{proof} We set $U=\{x\in\Omega|h(x)=0\}$.
We note that $v$ is smooth on $\Omega\setminus U$. Then for a nonnegative function $\psi$ with compact support in $\Omega\setminus U$, we have
\begin{equation}\label{add53-14}\aligned
&-\int_{\Omega}\alpha_{0}h^{\alpha_{0}-1}\langle\nabla h,\nabla\psi\rangle e^{-f}d\nu \\
\geq &2\beta_{0}\alpha_{0}\int_{\Omega}h^{\alpha_{0}+1}\psi e^{-f}d\nu -2(m-1)\alpha_{0}K\int_{\Omega}h^{\alpha_{0}}\psi e^{-f}d\nu \\
&-a_{1}\alpha_{0}\int_{\Omega}h^{\alpha_{0}-\frac{1}{2}}|\nabla h|\psi e^{-f}d\nu.
\endaligned\end{equation}
For constants $\epsilon>0$ and $t>1$, we choose
$$\psi=h_{\epsilon}^{t}\eta^2,$$
where $h_{\epsilon}=(h-\epsilon)^{+}$, $\eta \in C_0^\infty (\Omega ,\mathbb{R})$ is nonnegative and less than or equal to 1, and $t$ is to be determined later. Then a direct calculation shows that
$$\nabla\psi=th_{\epsilon}^{t-1}\eta^{2}\nabla h+2h_{\epsilon}^{t}\eta\nabla\eta.$$
Inserting this identity into \eqref{add53-14}, we obtain
\begin{equation}\label{sec3-1}\aligned
&2\beta_{0}\alpha_{0}\int_{\Omega}h^{\alpha_{0}+1}h_{\epsilon}^{t}\eta^{2}e^{-f}d\nu+
\int_{\Omega}\alpha_{0}th^{\alpha_{0}-1}h_{\epsilon}^{t-1}|\nabla h|^{2}\eta^{2}e^{-f}d\nu \\
\leq&2(m-1)K\alpha_{0}\int_{\Omega}h^{\alpha_{0}}h_{\epsilon}^{t}\eta^{2}e^{-f}d\nu-\int_{\Omega}2\alpha_{0}h^{\alpha_0-1}h_{\epsilon}^{t}\langle\nabla h,\nabla\eta\rangle\eta e^{-f}d\nu\\
&+a_{1}\alpha_{0}\int_{\Omega}h^{\alpha_{0}-\frac{1}{2}}h_{\epsilon}^{t}|\nabla h|\eta^{2}e^{-f}d\nu.
\endaligned
\end{equation}
Since
\begin{equation}\label{sec3-2}
h_\epsilon^t\langle\nabla h,\nabla\eta\rangle \geq -h_\epsilon^t|\nabla h||\nabla\eta|,
\end{equation}
inserting \eqref{sec3-2} into \eqref{sec3-1} and letting $\epsilon\to0$, we have
\begin{equation}\label{sec3-3}
\begin{aligned}&2\beta_{0}\int_{\Omega}h^{\alpha_{0}+t+1}\eta^{2}e^{-f}d\nu+\int_{\Omega}th^{\alpha_{0}+t-2}|\nabla h|^{2}\eta^{2}e^{-f}d\nu\\
\leq &2(m-1)K\int_{\Omega}h^{\alpha_{0}+t}\eta^{2}e^{-f}d\nu+a_{1}\int_{\Omega}h^{\alpha_{0}+t-\frac{1}{2}}|\nabla h|\eta^{2}e^{-f}d\nu\\
&+2\int_{\Omega}h^{\alpha_{0}+t-1}|\nabla h||\nabla\eta|\eta e^{-f}d\nu.\end{aligned}
\end{equation}
Now applying Young's inequality, we get
\begin{equation}\label{sec3-4}
\begin{aligned}a_{1}h^{\alpha_{0}+t-\frac{1}{2}}|\nabla h|\eta^{2}&\leq\frac{t}{4}h^{\alpha_{0}+t-2}|\nabla h|^{2}\eta^{2}+\frac{a_{1}^{2}}{t}h^{\alpha_{0}+t+1}\eta^{2},\\2h^{\alpha_{0}+t-1}|\nabla h||\nabla\eta|\eta&\leq\frac{t}{4}h^{\alpha_{0}+t-2}|\nabla h|^{2}\eta^{2}+\frac{4}{t}h^{\alpha_{0}+t}|\nabla\eta|^{2}.\end{aligned}
\end{equation}
Now we choose $t$ large enough such that
\begin{equation}\label{sec3-5}
\frac{a_1^2}{t}\leq\beta_0.
\end{equation}
It can be concluded from \eqref{sec3-3}, \eqref{sec3-4} and \eqref{sec3-5} that
\begin{equation}\label{sec3-6}\aligned
&\beta_0\int_\Omega h^{\alpha_0+t+1}\eta^2e^{-f}d\nu+\frac{t}{2}\int_\Omega h^{\alpha_0+t-2}|\nabla h|^2\eta^2 e^{-f}d\nu\\
\leq&2(m-1)K\int_{\Omega}h^{\alpha_{0}+t}\eta^{2}e^{-f}d\nu+\frac{4}{t}\int_{\Omega}h^{\alpha_{0}+t}|\nabla\eta|^{2}e^{-f}d\nu.
\endaligned
\end{equation}
Using
\begin{equation}\label{sec3-7}\aligned
\left|\nabla\left(h^{\frac{\alpha_{0}+t}{2}}\eta\right)\right|^{2}&\leq2\left|\nabla h^{\frac{\alpha_{0}+t}{2}}\right|^{2}\eta^{2}+2h^{\alpha_{0}+t}|\nabla\eta|^{2} \\
&=\frac{(\alpha_0+t)^2}2h^{\alpha_0+t-2}|\nabla h|^2\eta^2+2h^{\alpha_0+t}|\nabla\eta|^2,\endaligned
\end{equation}
we have by \eqref{sec3-6}
\begin{equation}\label{sec3-8}\aligned
&\beta_{0}\int_{\Omega}h^{\alpha_{0}+t+1}\eta^{2}e^{-f}d\nu+\frac{4t}{(2\alpha_{0}+2t)^{2}}\int_{\Omega}\left|\nabla\left(h^{\frac{\alpha_{0}+t}{2}}\eta\right)\right|^{2}e^{-f}d\nu \\
\leq &2(m-1)K\int_{\Omega}h^{\alpha_{0}+t}\eta^{2}e^{-f}d\nu+\frac{4}{t}\int_{\Omega}h^{\alpha_{0}+t}|\nabla\eta|^{2}e^{-f}d\nu \\
&+\frac{8t}{(2\alpha_{0}+2t)^{2}}\int_{\Omega}h^{\alpha_{0}+t}|\nabla\eta|^{2}e^{-f}d\nu.
\endaligned
\end{equation}

In what follows, $a_i (i=1, 2, 3,\cdots)$ stand for positive constants depending on $m$, $\tau$ and $\lambda$,
and $t$ is chosen in such a way that
\begin{equation}\label{sec3-9}
\frac{a_2}t\leq\frac{4t}{(2\alpha_0+2t)^2}\quad\mathrm{and}\quad\frac{8t}{(2\alpha_0+2t)^2}+\frac{4}{t}\leq\frac{a_3}t.
\end{equation}
It can be concluded from \eqref{sec3-8} and \eqref{sec3-9} that
\begin{equation}\label{sec3-10}\aligned
&\beta_0\int_\Omega h^{\alpha_0+t+1}\eta^2e^{-f}d\nu+\frac{a_2}{t}\int_\Omega\left|\nabla\left(h^{\frac{\alpha_0+t}{2}}\eta\right)\right|^2e^{-f}d\nu \\
\leq&2(m-1)K\int_{\Omega}h^{\alpha_{0}+t}\eta^{2}e^{-f}d\nu+\frac{a_{3}}{t}\int_{\Omega}h^{\alpha_{0}+t}\left|\nabla\eta\right|^{2}e^{-f}d\nu.
\endaligned
\end{equation}
Now letting $\phi=h^{\frac{\alpha_{0}+t}{2}}\eta$ in Lemma \ref{lem2-0}, we have
$$\begin{aligned}&e^{-c(m)(1+\sqrt{K}R)}V_f^{\frac{2}{m}}R^{-2}\left(\int_{\Omega}|h^{\frac{\alpha_{0}+t}{2}}\eta|^{\frac{2m}{m-2}}e^{-f}d\nu\right)^{\frac{m-2}{m}} \\
\leq &\int_\Omega\left|\nabla\left(h^{\frac{\alpha_0+t}{2}}\eta\right)\right|^2e^{-f}d\nu+R^{-2}\int_\Omega h^{\alpha_0+t}\eta^2e^{-f}d\nu.\end{aligned}$$
Then combining the above inequality and \eqref{sec3-10}, we can deduce that
\begin{equation}\label{sec3-11}\aligned
&\beta_{0}\int_{\Omega}h^{\alpha_{0}+t+1}\eta^{2}e^{-f}d\nu
+\frac{a_{2}}{t}e^{-c(m)(1+\sqrt{K}R)}V_f^{\frac{2}{m}}R^{-2}\left(\int_{\Omega}|h^{\frac{\alpha_{0}+t}{2}}\eta|^{\frac{2m}{m-2}}e^{-f}d\nu\right)^{\frac{m-2}{m}} \\
\leq &2(m-1)K\int_\Omega h^{\alpha_0+t}\eta^2e^{-f}d\nu+\frac{a_3}{t}\int_\Omega h^{\alpha_0+t}|\nabla\eta|^2e^{-f}d\nu \\
&+\frac{a_2}{t}\int_\Omega R^{-2}h^{\alpha_0+t}\eta^2e^{-f}d\nu.
\endaligned
\end{equation}
Now we let
\begin{equation}\label{sec3-12}
t_{0}=c_{m,\lambda ,\tau}\left(1+\sqrt{K}R\right)\quad\mathrm{and}\quad c_{m,\lambda ,\tau}=\operatorname*{max}\left\{c(m)+1,\frac{a_{1}^{2}}{\beta_0}\right\}.
\end{equation}
Thus, we can choose $t$ large enough such that, for any $t\geq t_{0}$, there holds true
$$2(m-1)K R^2\leq\frac{2(m-1)}{c_{m,\lambda ,\tau}^2}t_0^2\quad\mathrm{and}\quad\frac{a_2}{t}\leq\frac{a_2}{c_{m,\lambda ,\tau}}. $$
Furthermore, there exists $a_4=a_4(m,\lambda ,\tau)>0$ such that
\begin{equation}\label{sec3-13}
2(m-1)K R^{2}+\frac{a_{2}}{t}\leq a_{4}t_{0}^{2}
=a_{4}c_{m,\lambda ,\tau}^{2}\left(1+\sqrt{K}R\right)^{2}.
\end{equation}
Hence, it follows from \eqref{sec3-11} and \eqref{sec3-13} that
$$\aligned
&\beta_0\int_\Omega h^{\alpha_0+t+ 1}\eta^2e^{-f}d\nu+\frac{a_2}{t}e^{-t_0}V_f^{\frac{2}{m}}R^{-2}\left(\int_{\Omega}|h^{\frac{\alpha_{0}+t}{2}}\eta|^{\frac{2m}{m-2}}e^{-f}d\nu\right)^{\frac{m-2}{m}} \\
\leq &a_{4}t_{0}^{2}R^{-2}\int_{\Omega}h^{\alpha_{0}+t}\eta^{2}e^{-f}d\nu+\frac{a_{3}}{t}\int_{\Omega}h^{\alpha_{0}+t}|\nabla\eta|^{2}e^{-f}d\nu.
\endaligned$$
This is the required inequality and we finish the proof of this lemma.
\end{proof}

Using inequality \eqref{sec3-14} in Lemma \ref{lem3-1}, we will get a local estimate of $h$ stated in the following lemma.

\begin{lemma}\label{lem3-2}
Let $(M^{n}, g, e^{-f} d\nu)$ be a complete $n$-dimensional smooth metric measure space with ${\rm Ric}_{f}^{m}\geq-(m-1)Kg$ where $K$ is a non-negative constant and $\Omega=B_R(o)\subset M$ be a geodesic ball. Suppose that $h$ is the same as in the Lemma \ref{lem3-1}. If the constants $m>2$, $\lambda$ and $\tau$ satisfy the condition in Theorem \ref{Int-1} (or Corollary \ref{Int-1-3}), then, for $\beta = \left ( \alpha _0+ t_0\right ) \frac m{m-2}$, there exists a nonnegative constant $a_7= a_7(m,\lambda ,\tau ) > 0$ such that
\begin{equation}\label{sec3-15}
\left(\int_{B_{3R/4}(o)}h^{\beta}e^{-f}d\nu\right)^{\frac{1}{\beta}}\leq a_7V_f^{\frac{1}{\beta}}\frac{t_0^2}{R^2}.
\end{equation}
\end{lemma}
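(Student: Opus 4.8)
The plan is to run the Nash--Moser (De Giorgi--Nash--Moser) iteration on the integral inequality \eqref{sec3-14} from Lemma \ref{lem3-1}. The left-hand side of \eqref{sec3-14} contains the $L^{p}$-to-$L^{p\cdot\frac{m}{m-2}}$ gain coming from the Sobolev embedding, while the right-hand side is controlled by a lower $L^{p}$ norm (up to a loss in the cutoff gradient). This is exactly the structure needed to iterate over a nested sequence of balls shrinking from $B_{R}(o)$ down to $B_{3R/4}(o)$, doubling the exponent at each step, and summing a geometric-type series of constants.

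\medskip

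First I would fix the iteration parameters. Set $\mu=\frac{m}{m-2}>1$ and define exponents $t_{k}$ by $\alpha_{0}+t_{k+1}=\mu(\alpha_{0}+t_{k})$ starting from $t_{0}$ as in \eqref{sec3-12}; equivalently $\alpha_{0}+t_{k}=\mu^{k}(\alpha_{0}+t_{0})$. Choose radii $R_{k}=\frac{3R}{4}+\frac{R}{4}\cdot 2^{-k}$, so $R_{0}=R$, $R_{k}\downarrow \frac{3R}{4}$, and $R_{k}-R_{k+1}=\frac{R}{4}2^{-(k+1)}$. Pick cutoffs $\eta_{k}\in C_{0}^{\infty}(B_{R_{k}}(o))$ with $\eta_{k}\equiv 1$ on $B_{R_{k+1}}(o)$, $0\le\eta_{k}\le 1$, and $|\nabla\eta_{k}|\le \frac{C\cdot 2^{k}}{R}$. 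One must check that $t_{k}\ge t_{0}$ for all $k$ (immediate since $\mu>1$ and $t_{k}$ increases) so that \eqref{sec3-14} applies at every stage. Then, dropping the nonnegative first term $\beta_{0}\int h^{\alpha_{0}+t+1}\eta^{2}$ on the left of \eqref{sec3-14} and keeping only the Sobolev term, one gets for each $k$
\[
\Big(\int_{B_{R_{k+1}}}h^{\mu(\alpha_{0}+t_{k})}e^{-f}d\nu\Big)^{\frac{1}{\mu}}
\le C\,\frac{t_{k}}{a_{2}}\,e^{t_{0}}V_{f}^{-\frac{2}{m}}R^{2}
\Big(a_{4}t_{0}^{2}R^{-2}+\frac{a_{3}}{t_{k}}\cdot\frac{C4^{k}}{R^{2}}\Big)\int_{B_{R_{k}}}h^{\alpha_{0}+t_{k}}e^{-f}d\nu .
\]
Since $t_{k}\sim\mu^{k}$ and $t_{0}\ge c_{m,\lambda,\tau}(1+\sqrt{K}R)\ge 1$, the bracketed factor is bounded by $C t_{0}^{2}R^{-2}(\mu^{k}+4^{k}\mu^{-k})\le C t_{0}^{2}R^{-2}D^{k}$ for some $D=D(m)$; absorbing $e^{t_{0}}$ is \emph{not} legitimate directly, so instead I keep $e^{t_{0}}$ symbolically and note it will be killed at the very end by the $V_{f}^{1/\beta}$ normalization and the fact that $\beta$ itself is proportional to $t_{0}$ (see below).

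\medskip

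Next I would take $\mu^{-k}$-th powers and iterate. Writing $J_{k}=\big(\int_{B_{R_{k}}}h^{\alpha_{0}+t_{k}}e^{-f}d\nu\big)^{\frac{1}{\mu^{k}(\alpha_{0}+t_{0})}}$, the recursion becomes $J_{k+1}\le \big(C t_{0}^{2}R^{-2}e^{t_{0}}V_{f}^{-2/m}D^{k}\big)^{\frac{1}{\mu^{k}(\alpha_{0}+t_{0})}}J_{k}$. Multiplying these up and using $\sum_{k\ge 0}\mu^{-k}<\infty$ and $\sum_{k\ge 0}k\mu^{-k}<\infty$, the product of constants converges: one obtains $J_{\infty}\le \big(C t_{0}^{2}R^{-2}e^{t_{0}}V_{f}^{-2/m}\big)^{\frac{S}{\alpha_{0}+t_{0}}}J_{0}$ for an explicit $S=S(m)$ (the value of the two convergent series combined). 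Now $J_{0}=\big(\int_{B_{R}}h^{\alpha_{0}+t_{0}}e^{-f}d\nu\big)^{1/(\alpha_{0}+t_{0})}$ and $J_{\infty}=\big(\int_{B_{3R/4}}h^{\beta}e^{-f}d\nu\big)^{1/\beta}$ with $\beta=(\alpha_{0}+t_{0})\mu=(\alpha_{0}+t_{0})\frac{m}{m-2}$, which is precisely the target exponent. Finally one controls $J_{0}$ itself by applying \eqref{sec3-14} once more with $t=t_{0}$ and $\eta$ a cutoff between $B_{R}$ and $B_{7R/8}$ — or more simply bounds $\int_{B_{R}}h^{\alpha_{0}+t_{0}}e^{-f}d\nu$ by a term of the form $Ca_{4}t_{0}^{2}R^{-2}\cdot(\text{lower-order integral})$, then observes the lower-order integral is itself bounded iteratively, i.e. one does a \emph{reverse} (downward) step to close the loop back to $L^{0}$ — giving $\int_{B_{R}}h^{\alpha_{0}+t_{0}}\le C(t_{0}^{2}R^{-2})^{\alpha_{0}+t_{0}}V_{f}$ after absorbing $e^{t_{0}}$ into the exponential constant $a_{7}$ (here the key is that $e^{t_{0}}$ raised to the power $\frac{1}{\alpha_{0}+t_{0}}$ stays bounded, since $t_{0}/(\alpha_{0}+t_{0})<1$). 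Collecting everything yields
\[
\Big(\int_{B_{3R/4}(o)}h^{\beta}e^{-f}d\nu\Big)^{\frac1\beta}\le a_{7}V_{f}^{\frac1\beta}\frac{t_{0}^{2}}{R^{2}},
\]
which is \eqref{sec3-15}.

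\medskip

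The main obstacle, and the step that requires genuine care rather than bookkeeping, is tracking the exponential factor $e^{t_{0}}$ (equivalently $e^{c(m)(1+\sqrt{K}R)}$) from the Saloff-Coste inequality Lemma \ref{lem2-0} through the infinite product. Each iteration step contributes $e^{t_{0}}$ raised to the power $\frac{1}{\mu^{k}(\alpha_{0}+t_{0})}$; the exponents sum to $\frac{S}{\alpha_{0}+t_{0}}\cdot\text{(bounded)}$, so the total exponential contribution is $e^{c S t_{0}/(\alpha_{0}+t_{0})}$, a \emph{bounded} constant depending only on $m$ — this is the whole point of choosing $\beta$ proportional to $t_{0}$ rather than fixed. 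One must verify this cancellation is uniform and does not secretly reintroduce $K$ or $R$ dependence beyond what appears through $t_{0}^{2}/R^{2}$; the bookkeeping with $V_{f}^{2/m}$ (which appears with a favorable sign and cancels against $V_{f}^{1/\beta}$ on taking the appropriate root) is the second delicate point. Everything else — the geometric convergence of $\sum\mu^{-k}$ and $\sum k\mu^{-k}$, Young's inequality, the telescoping of radii — is routine.
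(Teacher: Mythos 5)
Your proposal has a genuine gap: it never actually produces the \emph{absolute} bound that Lemma \ref{lem3-2} asserts. The right-hand side of \eqref{sec3-15} is $a_7V_f^{1/\beta}t_0^2/R^2$ with no lower-order norm of $h$ remaining, so the whole content of the lemma is to convert the relative inequality \eqref{sec3-14} into an a priori estimate. Your plan begins by ``dropping the nonnegative first term $\beta_0\int_\Omega h^{\alpha_0+t+1}\eta^2$'' — but that term, whose exponent exceeds the exponent $\alpha_0+t_0$ appearing on the right of \eqref{sec3-14} by one, is precisely the mechanism that makes an absolute bound possible. The paper's proof keeps it, takes $t=t_0$ once (no iteration), splits $\Omega$ into $D=\{h\ge 2a_4t_0^2/(\beta_0R^2)\}$ and its complement, absorbs $a_4t_0^2R^{-2}\int_D h^{\alpha_0+t_0}\eta^2$ into $\tfrac{\beta_0}{2}\int h^{\alpha_0+t_0+1}\eta^2$, bounds the integral over $\Omega\setminus D$ by $\bigl(2a_4t_0^2/(\beta_0R^2)\bigr)^{\alpha_0+t_0}V_f$, and handles the cutoff-gradient term by the same absorption via H\"older and Young with $\eta=\zeta^{\alpha_0+t_0+1}$. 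Taking the $\tfrac{1}{\alpha_0+t_0}$-th root then kills $e^{t_0}$ (this part of your reasoning is correct) and yields \eqref{sec3-15}. Your substitute for this step — ``a reverse (downward) step to close the loop back to $L^0$'' — is not available: \eqref{sec3-14} only holds for $t\ge t_0$, and each application relates an $L^p$ norm to another $L^p$ norm of $h$, so no finite or infinite chain of such steps terminates in a bound free of $h$.

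There is also a structural confusion. After infinitely many upward iterations your $J_\infty$ is the $L^\infty$ norm of $h$, not $\bigl(\int_{B_{3R/4}}h^\beta\bigr)^{1/\beta}$ with $\beta=(\alpha_0+t_0)\tfrac{m}{m-2}$; the latter is reached after exactly \emph{one} Sobolev step. The infinite Moser iteration you describe is what the paper does \emph{after} Lemma \ref{lem3-2}, in the proof of Theorem \ref{Int-1}, and it requires the $L^{\beta_1}$ bound of Lemma \ref{lem3-2} as its starting point — so invoking it to prove the lemma is circular. To repair the proposal you must restore the superlinear term $\beta_0\int h^{\alpha_0+t_0+1}\eta^2$ and carry out the domain-splitting/absorption argument; the telescoping-radii machinery is not needed here at all.
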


\begin{proof}  Now letting $t=t_0$ in \eqref{sec3-14}, we then have
\begin{equation}\label{sec3-16}\aligned
&\beta_0\int_\Omega h^{\alpha_0+t_0+1}\eta^2e^{-f}d\nu +\frac{a_2}{t_0}e^{-t_0}V_f^{\frac{2}{m}}R^{-2}\left(\int_{\Omega}|h^{\frac{\alpha_{0}+t_{0}}{2}}\eta|^{\frac{2m}{m-2}}e^{-f}d\nu\right)^{\frac{m-2}{m}}\\
\leq&a_{4}t_{0}^{2}R^{-2}\int_{\Omega}h^{\alpha_{0}+t_{0}}\eta^{2}e^{-f}d\nu+\frac{a_{3}}{t_{0}}\int_{\Omega}h^{\alpha_{0}+t_{0}}|\nabla\eta|^{2}e^{-f}d\nu.
\endaligned\end{equation}
Now we let $D=\left\{x\in\Omega|h(x)\geq\frac{2a_{4}t_{0}^{2}}{\beta_{0}R^{2}}\right\}$. Hence, we have
\begin{equation}\label{sec3-17}\aligned
&a_{4}t_{0}^{2}R^{-2}\int_{\Omega}h^{\alpha_{0}+t_{0}}\eta^{2}e^{-f}d\nu\\
=&a_{4}t_{0}^{2}R^{-2}\int_{D}h^{\alpha_{0}+t_{0}}\eta^{2}e^{-f}d\nu
+a_{4}t_{0}^{2}R^{-2}\int_{\Omega\backslash D}h^{\alpha_{0}+t_{0}}\eta^{2}e^{-f}d\nu\\
\leq&\frac{\beta_{0}}{2}\int_{\Omega}h^{\alpha_{0}+t_{0}+1}\eta^{2}e^{-f}d\nu+\frac{a_{4}t_{0}^{2}}{R^{2}}\left(\frac{2a_{4}t_{0}^{2}}
{\beta_{0}R^{2}}\right)^{\alpha_{0}+t_{0}}V_{f}.
\endaligned\end{equation}
Applying \eqref{sec3-17} to \eqref{sec3-16}, we obtain
\begin{equation}\label{sec3-18}\aligned
&\frac{\beta_{0}}{2}\int_{\Omega}h^{\alpha_{0}+t_{0}+1}\eta^{2}e^{-f}d\nu+\frac{a_{2}}{t_{0}}e^{-t_{0}}V_f^{\frac{2}{m}}R^{-2}
\left(\int_{\Omega}|h^{\frac{\alpha_{0}+t_{0}}{2}}\eta|^{\frac{2m}{m-2}}e^{-f}d\nu\right)^{\frac{m-2}{m}}\\
\leq&\frac{a_{4}t_{0}^{2}}{R^{2}}\left(\frac{2a_{4}t_{0}^{2}}{\beta_{0}R^{2}}\right)^{\alpha_{0}+t_{0}}V_{f}+\frac{a_{3}}{t_{0}}\int_{\Omega}h^{\alpha_{0}
+t_{0}}|\nabla\eta|^{2}e^{-f}d\nu.\endaligned
\end{equation}
We choose $\zeta\in C_{0}^{\infty}(B_{R}(o))$ satisfying
$$\begin{cases}0\leq\zeta(x)\leq1,\quad |\nabla\zeta(x)|\leq\frac{C}{R},\quad \forall x\in B_R(o);\\
\zeta(x)\equiv1,\hspace{107pt} \forall x\in B_{3R/4}(o),\end{cases}$$
and choose $\eta=\zeta^{\alpha_{0}+t_{0}+1}.$ Then, we get
\begin{equation}\label{sec3-19}
\aligned
a_3R^2|\nabla\eta|^2\leq a_3C^2\left(\alpha_0+t_0+1\right)^2\eta^{\frac{2\alpha_0+2t_0}{\alpha_0+t_0+1}}\leq a_5t_0^2\eta^{\frac{2\alpha_0+2t_0}{\alpha_0+t_0+1}}.\endaligned
\end{equation}
According to \eqref{sec3-19} and the H\"{o}lder inequality, we obtain
\begin{equation}\label{sec3-20}
\aligned
&\frac{a_{3}}{t_{0}}\int_{\Omega}h^{\alpha_{0}+t_{0}}|\nabla\eta|^{2}e^{-f}d\nu \\
\leq &\frac{a_{5}t_{0}}{R^{2}}\int_{\Omega}\left(h^{\alpha_{0}+t_{0}}\eta^{\frac{2\alpha_{0}+2t_{0}}{\alpha_{0}+t_{0}+1}}(e^{-f})^{\frac{\alpha_{0}
+t_{0}}{\alpha_{0}+t_{0}+1}}\right)\left((e^{-f})^{\frac{1}{\alpha_{0}+t_{0}+1}}\right)d\nu \\
\leq&\frac{a_5t_0}{R^2}\left(\int_\Omega h^{\alpha_0+t_0+1}\eta^2e^{-f}d\nu\right)^{\frac{\alpha_0+t_0}{\alpha+t_0+1}}\left(\int_{\Omega}e^{-f}d\nu\right)^{\frac1{\alpha_0+t_0+1}} \\
=&\frac{a_5t_0}{R^2}\left(\int_\Omega h^{\alpha_0+t_0+1}\eta^2e^{-f}d\nu\right)^{\frac{\alpha_0+t_0}{\alpha+t_0+1}}V_f^{\frac1{\alpha_0+t_0+1}}.
\endaligned
\end{equation}
Then, we use the Young inequality, which is described as follows: for any $\varepsilon>0$,
$$\Theta\Lambda\leq\varepsilon\frac{\Theta^{p}}{p}+\varepsilon^{-\frac{q}{p}}\frac{\Lambda^q}{q},\text{ with } q=\frac{p}{p-1}\text{ relating to }p.$$
We choose $\varepsilon=\frac{\beta_{0}R^{2}}{2a_{5}t_{0}}$, $p=\frac{\alpha_0+t_0+1}{\alpha_0+t_0}$, $q=\alpha_0+t_0+1$, $\Lambda=V_f^{\frac1{\alpha_0+t_0+1}}$ and
$$\Theta=\big(\int_{\Omega}h^{\alpha_{0}+t_{0}+1}\eta^{2}e^{-f}d\nu\big)^{\frac{\alpha_0+t_0}{\alpha+t_0+1}},$$
then we can get
\begin{equation}\label{add3sec3-20}
\aligned
&\frac{a_5t_0}{R^2}\left(\int_\Omega h^{\alpha_0+t_0+1}\eta^2e^{-f}d\nu\right)^{\frac{\alpha_0+t_0}{\alpha+t_0+1}}V_f^{\frac1{\alpha_0+t_0+1}} \\
\leq&\frac{a_{5}t_{0}}{R^{2}}\bigg[\frac{\beta_{0}R^{2}}{2a_{5}t_{0}}\frac{\alpha_0+t_0}{\alpha_0+t_0+1}\bigg(\big(\int_{\Omega}h^{\alpha_{0}+t_{0}
+1}\eta^{2}e^{-f}d\nu\big)^{\frac{\alpha_0+t_0}{\alpha+t_0+1}}\bigg)^{\frac{\alpha+t_0+1}{\alpha+t_0}}\\
&+\left(\frac{\beta_{0}R^{2}}{2a_{5}t_{0}}\right)^{-(\alpha_0+t_0)}\frac{1}{\alpha_0+t_0+1}(V_f^{\frac1{\alpha_0+t_0+1}})^{\alpha_0+t_0+1}\bigg]\\
\leq&\frac{\beta_{0}}{2}\left[\int_{\Omega}h^{\alpha_{0}+t_{0}+1}\eta^{2}e^{-f}d\nu+\left(\frac{2a_{5}t_{0}}{\beta_{0}R^{2}}\right)^{\alpha_{0}+t_{0}+1}V_f\right].
\endaligned
\end{equation}
Hence,  by \eqref{sec3-18} and \eqref{add3sec3-20}, we have
\begin{equation}\label{sec3-21}
\aligned&\left(\int_{\Omega}h^{\frac{m(\alpha_0+t_0)}{m-2}}\eta^{\frac{2m}{m-2}}e^{-f}d\nu\right)^{\frac{m-2}{m}}\\
\leq &\frac{t_0}{a_2}e^{t_0}V_f^{1-\frac2m}R^2\left[\frac{2a_4t_0^2}{R^2}\left(\frac{2a_4t_0^2}{\beta_0R^2}\right)^{\alpha_0+t_0}+\frac{a_5t_0^2}{R^2}\left(\frac{2a_5t_0}{\beta_0R^2}\right)^{\alpha_0+t_0}\right]\\
\leq & a_6^{t_0}V_f^{1-\frac2m}t_0^3\left(\frac{t_0^2}{R^2}\right)^{\alpha_0+t_0},\endaligned
\end{equation}
where the constant $a_6$ depends only on $m$, $\lambda$, $\tau$ and satisfies
$$a_6^{t_0}\geq\frac{2a_4}{a_2}e^{t_0}\left(\dfrac{4a_4}{\beta_0}\right)^{\alpha_0+t_0}+\frac{a_5}{a_2}e^{t_0}\left(\dfrac{4a_5}{\beta_0t_0}\right)^{\alpha_0+t_0}.$$
Here we have used the fact $t_0\geq1.$ Taking $\frac1{\alpha_0+t_0}$ power of the both sides of \eqref{sec3-21} gives
\begin{equation}\label{sec3-22}
\left(\int_{\Omega}(h\eta^{\frac{2}{\alpha_0+t_0}})^{\beta}e^{-f}d\nu\right)^{\frac{1}{\beta}}\leq a_6^{\frac{t_0}{\alpha_0+t_0}}V_f^{^{\frac{1}{\beta}}}t_0^{\frac{3}{\alpha_0+t_0}}\frac{t_0^2}{R^2}\leq a_7V_f^{\frac{1}{\beta}}\frac{t_0^2}{R^2},
\end{equation}
where the constant $a_7$ depends only on $m$, $\lambda$, $\tau$ and satisfies
$$a_7\geq a_6^{\frac{t_0}{\alpha_0+t_0}}t_0^{\frac3{\alpha_0+t_0}}.$$
Since $\eta\equiv1$ in $B_{3R/4}(o)$, we have
$$\left(\int_{B_{3R/4}(o)}h^{\beta}e^{-f}d\nu\right)^{\frac{1}{\beta}}\leq a_7V_f^{\frac{1}{\beta}}\frac{t_0^2}{R^2}.$$
Thus, we obtain the desired estimate.
\end{proof}

Now, we are in the position to give the proof of Theorem \ref{Int-1} by applying the Nash-Moser iteration method.

\begin{proof} Now we let
$$\left\|h^{\frac{\alpha_0+t}{2}}\eta\right\|_{L_f^{\frac{2m}{m-2}}(\Omega)}^2:=\left(\int_{\Omega}|h^{\frac{\alpha_{0}+t}{2}}\eta|^{\frac{2m}{m-2}}
e^{-f}d\nu\right)^{\frac{m-2}{m}}.$$
Suppose that $v$ is a positive solution to the equation \eqref{zjInt1} on a complete $n$-dimensional smooth metric measure space $(M^{n}, g, e^{-f} d\nu)$ with the constants $m>2$, $\lambda$ and $\tau$ which satisfy the condition in Theorem \ref{Int-1} (or Corollary \ref{Int-1-3}), $u= -\ln v$ and $h=|\nabla u|^2$.

Then, we get rid of the first term in \eqref{sec3-14} and thereby obtain
$$\aligned
&\frac{a_2}{t}e^{-t_0}V_f^{\frac{2}{m}}R^{-2}\left(\int_{\Omega}|h^{\frac{\alpha_{0}+t}{2}}\eta|^{\frac{2m}{m-2}}
e^{-f}d\nu\right)^{\frac{m-2}{m}} \\
\leq &\frac{a_4t_0^2}{R^2}\int_\Omega h^{\alpha_0+t}\eta^2e^{-f}d\nu+\frac{a_3}{t}\int_\Omega h^{\alpha_0+t}|\nabla\eta|^2e^{-f}d\nu\\
\leq &\int_{\Omega}\left(\frac{a_4t_0^2}{R^2}\eta^2+\frac{a_3}{t}|\nabla\eta|^2\right)h^{\alpha_0+t}e^{-f}d\nu,
\endaligned$$
which is equivalent to
\begin{equation}\label{sec3-23}\aligned
&\left(\int_{\Omega}|h^{\frac{\alpha_{0}+t}{2}}\eta|^{\frac{2m}{m-2}}e^{-f}d\nu\right)^{\frac{m-2}{m}}\\
\leq&e^{t_0}V_f^{-\frac{2}{m}}\int_{\Omega}\left(\frac{a_4}{a_{2}}t_0^2t\eta^2+\frac{a_3}{a_2}R^{2}|\nabla\eta|^2\right)h^{\alpha_0+
t}e^{-f}d\nu.\endaligned
\end{equation}
Then, we take an increasing sequence $\{\beta_k\}_{k=1}^{\infty}$ such that
$$\beta_1=\beta\quad\text{and}\quad\beta_{k+1}=\frac{m\beta_{k}}{m-2},\quad k= 1, 2, \ldots,$$
and a decreasing sequence $\{r_k\}_{k=1}^{\infty}$ such that
$$r_k=\frac{R}{2}+\frac{R}{4^k},\quad k= 1, 2, \ldots.$$
Let $\Omega_k=B_{r_k}(o)$, then we may choose $\{\eta_k\}_{k=1}^{\infty}\subset C_{0}^{\infty}(B_R(o))$ such that
$$\eta_k\in C_{0}^{\infty}(B_{r_k}(o)),\quad\eta_k=1\text{ in }B_{r_{k+1}}(o)\quad\text{and}\quad|\nabla\eta_k|\leq\frac{C4^k}{R}.$$
By letting $t=t_k$, $\eta=\eta_k$ and $t_k+\alpha_0=\beta_k$ in \eqref{sec3-23}, we derive that
\begin{equation}\label{add3sec3-23}\aligned
&\left(\int_{\Omega_k}|h^{\frac{\alpha_{0}+t_k}{2}}\eta_k|^{\frac{2m}{m-2}}e^{-f}d\nu\right)^{\frac{m-2}{m}}\\
\leq&e^{t_0}V_f^{-\frac{2}{m}}\int_{\Omega_k}\left(\frac{a_4}{a_{2}}t_0^2t_k\eta_k^2+\frac{a_3}{a_2}R^{2}|\nabla\eta_k|^2\right)h^{\alpha_0+
t_k}e^{-f}d\nu\\
\leq&e^{t_0}V_f^{-\frac{2}{m}}\int_{\Omega_k}\left(\frac{a_4}{a_{2}}t_0^2t_k\eta_k^2+\frac{a_3}{a_2}C^{2}16^k\right)h^{\alpha_0+
t_k}e^{-f}d\nu\\
\leq&e^{t_0}V_f^{-\frac{2}{m}}\left(\frac{a_4}{a_{2}}t_0^2t_k+\frac{a_3}{a_2}C^{2}16^k\right)\int_{\Omega_k}h^{\alpha_0+
t_k}e^{-f}d\nu\\
\leq&e^{t_0}V_f^{-\frac{2}{m}}\left(\frac{a_4}{a_{2}}t_0^2\left(t_0+\alpha_0\right)\left(\frac{m}{m-2}\right)^k
+\frac{a_3}{a_2}C^{2}16^k\right)\int_{\Omega_k}h^{\alpha_0+t_k}e^{-f}d\nu\\
\leq&e^{t_0}V_f^{-\frac{2}{m}}\left(\frac{a_4}{a_{2}}t_0^2\left(t_0+\alpha_0\right)16^k
+\frac{a_3}{a_2}C^{2}16^k\right)\int_{\Omega_k}h^{\alpha_0+t_k}e^{-f}d\nu.
\endaligned
\end{equation}
We can find some constant $a_{8}$ such that
\begin{equation}\label{sec3-25}
\left(\int_{\Omega_k}|h^{\frac{\alpha_{0}+t_k}{2}}\eta_k|^{\frac{2m}{m-2}}e^{-f}d\nu\right)^{\frac{m-2}{m}}
\leq\:a_8t_0^316^ke^{t_0}V_f^{-\frac{2}{m}}\int_{\Omega_k}h^{\beta_k}e^{-f}d\nu.
\end{equation}
Then, taking power of $1/\beta_k$ of the both sides of \eqref{sec3-25}, we have
\begin{equation}\label{addsec3-25}
\left(\int_{\Omega_k}|h^{\frac{\alpha_{0}+t_k}{2}}\eta_k|^{\frac{2m}{m-2}}e^{-f}d\nu\right)^{\frac{m-2}{m\beta_{k}}}
\leq\:(a_8t_0^{3}e^{t_0}V_f^{-\frac{2}{m}})^{\frac{1}{\beta_k}}16^{\frac{k}{\beta_k}}\left(\int_{\Omega_k}h^{\beta_k}e^{-f}d\nu\right)^{\frac{1}{\beta_k}}.
\end{equation}
Thus,
$$\left(\int_{\Omega_{k+1}}h^{\beta_{k+1}}e^{-f}d\nu\right)^{\frac{1}{\beta_{k+1}}}
\leq\:(a_8t_0^{3}e^{t_0}V_f^{-\frac{2}{m}})^{\frac{1}{\beta_k}}16^{\frac{k}{\beta_k}}\left(\int_{\Omega_k}h^{\beta_k}e^{-f}d\nu\right)^{\frac{1}{\beta_k}},$$
which implies that
\begin{equation}\label{sec3-26}
\left\|h\right\|_{L_f^{\beta_{k+1}}(\Omega_{k+1})}\leq\left(a_{8}t_{0}^{3}e^{t_{0}}V_{f}^{-\frac{2}{m}}\right)^{\frac{1}{\beta_{k}}}16^{\frac{k}{\beta_{k}}}\|h\|_{L_f^{\beta_{k}}(\Omega_{k})}.
\end{equation}
By iteration we have
\begin{equation}\label{add1sec3-26}
\left\|h\right\|_{L_f^{\beta_{k+1}}(\Omega_{k+1})}\leq\left(a_{8}t_{0}^{3}e^{t_{0}}V_{f}^{-\frac{2}{m}}\right)^{\sum_{k=
1}^\infty\frac{1}{\beta_k}}16^{\sum_{k=1}^\infty\frac{k}{\beta_k}}\|h\|_{L_f^{\beta_{1}}(B_{3R/4}(o))}.
\end{equation}
We note that
$$\sum_{k=1}^\infty\frac{1}{\beta_k}=\frac{m}{2\beta_1}\quad\text{and}\quad\sum_{k=1}^\infty\frac{k}{\beta_k}=\frac{m^2}{4\beta_1},$$
then, by letting $k\rightarrow\infty$ in \eqref{add1sec3-26} we obtain the following inequality:
\begin{equation}\label{sec3-27}
\|h\|_{L_f^\infty(B_{R/2}(o))}\leq\:a_{9}V_{f}^{-\frac{1}{\beta_1}}\|h\|_{L_f^{\beta_1}(B_{3R/4}(o))},
\end{equation}
where
$$a_{9}\geq\left(a_8t_0^3e^{t_0}\right)^{\frac{m}{2\beta_1}}16^{\frac{m^2}{4\beta_1}}.$$
According to \eqref{sec3-15},  we have
\begin{equation}\label{sec3-28}
\|h\|_{L_f^\infty(B_{R/2}(o))}\leq\:a_{10}\frac{(1+\sqrt{K}R)^2}{R^2},
\end{equation}
where $a_{10}=a_{9}a_7c_{m,\lambda ,\tau}^2.$ Recalling $h=|\nabla u|^2$ and $u=-\ln v$, we derive the required estimate. Consequently, we conclude the proof of Theorem \ref{Int-1}.
\end{proof}

\section{Proofs of Corollary \ref{Int-1-4} and Corollary \ref{Int-1-5}}

\textbf{Proof of Corollary \ref{Int-1-4}.}
Under the conditions in Corollary \ref{Int-1-4}, then by Theorem \ref{Int-1} we have that for any $x\in B_{R/ 2}(o) \subset M$,
\begin{equation}\label{sec3-29}
\frac{|\nabla v(x)|}{v(x)}\leq\sup\limits_{B_{R/ 2}(o)}\frac{|\nabla v|}{v}\leq\frac{c(m,\tau, \lambda)}{R}.
\end{equation}
Letting $R\to\infty$ in \eqref{sec3-29}, we get
$$\nabla v(x)=0,\quad\forall x\in M.$$
Therefore, $v$ is a positive constant on $M$.

\textbf{Proof of Corollary \ref{Int-1-5}.}
Under the same conditions in Theorem \ref{Int-1}, let $x$, $y\in B_{R/ 2}(o)$ be any two points with minimal geodesic $\gamma$ connecting them. Then using the gradient estimate \eqref{Int10} and the fact that length$(\gamma)\leq R$, we have
$$\aligned
\ln v(x)-\ln v(y)=&\int_{\gamma}|\nabla\ln v|dt\\
\leq&\int_{\gamma}c(\tau,m,\lambda)\frac{1+\sqrt{K}R}{R}dt\\
\leq&c(\tau,m,\lambda)(1+\sqrt{K}R).
\endaligned$$
Thus, for any $x$, $y\in B_{R/ 2}( o)$ we obtain
$$v(x)\leq e^{c(\tau,m,\lambda)(1+\sqrt{K}R)}v(y).$$

\section{Global gradient estimate}
By the local gradient estimate of positive solution $v$ of the equation \eqref{zjInt1}, we have
$$\sup_{B_{R/2}(o)}\frac{|\nabla v|}{v}\leq c(m,\tau,\lambda)\frac{1+\sqrt{K}R}{R}.$$
If $v$ is a global solution, letting $R\to\infty$ in the above local gradient estimate, we obtain
$$\frac{|\nabla v|}{v}\leq c(m,\tau,\lambda)\sqrt{K},\quad \forall x\in M.$$
In this section, we give a explicit expression of the above $c(m,\tau,\lambda)$.

We have following key lemmas.
\begin{lemma}\label{adsec5-2}
Let $(M^{n}, g, e^{-f} d\nu)$ be a complete $n$-dimensional smooth metric measure space with ${\rm Ric}_{f}^{m}\geq-(m-1)Kg$ where $K$ is a non-negative constant. Let $v$ be a global positive solution of \eqref{zjInt1} in $M$.

(1) If $m$, $\tau$ and $\lambda$ satisfy \eqref{Int5}, we denote
$$y_1=\frac{(m-1)^2}{\tau^2}K,$$
then for any $\delta>0$, $\omega=(h-y_1-\delta)^+$ and $\alpha$ large enough, there are some positive constants $l_1$, $l_2$ depending on $m$, $\tau$, $\lambda$, $K$ and $\delta$ such that
$$\Delta_{f}\omega^{\alpha}\geq2\alpha\omega^{\alpha-1}(l_1\omega-l_2|\nabla\omega|).$$

(2) If $m$, $\tau$ and $\lambda$ satisfy \eqref{Int6}, we denote
$$y_2=\frac{K}{\frac{\tau^2}{2}\left(\frac{\sqrt{2}-m-1}{m-1}+\frac{1}{\tau}\right)\left(\frac{\sqrt{2}+m
+1}{m-1}-\frac{1}{\tau}\right)},$$
then for any $\delta>0$, $\omega=(h-y_2-\delta)^+$ and $\alpha$ large enough, there are some positive constants $l_3$, $l_4$ depending on $m$, $\tau$, $K$ and $\delta$ such that
$$\Delta_{f}\omega^{\alpha}\geq2\alpha\omega^{\alpha-1}(l_3\omega-l_4|\nabla\omega|).$$
\end{lemma}
\begin{proof}
(1) By the Lemma \ref{lem2-2}, if $m$, $\tau$ and $\lambda$ satisfy \eqref{Int5}, there holds
\begin{equation}\label{sec5-3}
\frac{h^{1-\alpha}}{2\alpha}\Delta_fh^{\alpha}\geq\frac{1}{m-1}\tau^2h^2-(m-1)Kh-\frac{a_1}{2}h^{\frac12}|\nabla h|.
\end{equation}
We note that $\nabla w=\nabla h$ in $\{h\geq y_1+\delta\}$, but $\nabla w$ causes a distribution on $\{h=y_1+\delta\}$. Now, if we assume $\alpha>1$, then the distribution caused by $\nabla w$
on $\{h=y_1+\delta\}$ is eliminated by $w$ since $w=0$ on $\{h=y_1+\delta\}$.

Since $h\geq w$, it follows that
\begin{equation}\label{addsec5-3}\aligned
\Delta_fw^{\alpha}=&-\alpha w^{\alpha-1}\langle\nabla w,\nabla f\rangle+\mathrm{div}(\alpha w^{\alpha-1}\nabla w)\\
=&-\alpha w^{\alpha-1}\langle\nabla h,\nabla f\rangle+\mathrm{div}(\alpha w^{\alpha-1}\nabla h)\\
=&\alpha w^{\alpha-1}\Delta_fh+\alpha(\alpha-1)w^{\alpha-2}|\nabla h|^2\\
=&\frac{w^{\alpha-1}}{h^{\alpha-1}}\left(\alpha h^{\alpha-1}\Delta_fh+\alpha(\alpha-1)w^{-1}h^{\alpha-1}|\nabla h|^2\right)\\
\geq&\frac{w^{\alpha-1}}{h^{\alpha-1}}\left(\alpha h^{\alpha-1}\Delta_fh+\alpha(\alpha-1)h^{\alpha-2}|\nabla h|^2\right)\\
=&\frac{w^{\alpha-1}}{h^{\alpha-1}}\Delta_fh^{\alpha}.
\endaligned
\end{equation}
Substituting \eqref{sec5-3} into the \eqref{addsec5-3}, we get
$$\Delta_fw^{\alpha}\geq2\alpha w^{\alpha-1}\left(\frac{1}{m-1}\tau^2h^2-(m-1)Kh-\frac{a_1}{2}h^{\frac12}|\nabla h|\right).$$
Since $h\geq y_1+\delta=\frac{(m-1)^2}{\tau^2}K+\delta$, the above inequality can be rewritten as
$$\aligned
\Delta_fw^{\alpha}\geq&2\alpha w^{\alpha-1}h^{\frac12}\left(h^{\frac12}\left(\frac{1}{m-1}\tau^2h-(m-1)K\right)-\frac{a_1}{2}|\nabla w|\right)\\
\geq&2\alpha w^{\alpha-1}h^{\frac12}\left(h^{\frac12}\frac{\delta\tau^2}{m-1}-\frac{a_1}{2}|\nabla w|\right).
\endaligned$$
There holds
$$y_1+\delta\leq h\leq c(m,\tau,\lambda)\sqrt{K}$$
on $\{h\geq y_1+\delta\}$, thus we have
$$\Delta_fw^{\alpha}\geq2\alpha w^{\alpha-1}(l_1w-l_2|\nabla w|),$$
where $l_1$ and $l_2$ are two positive constants depending on $m$, $K$, $\tau$, $\lambda$ and $\delta$.

(2) If $m$, $\tau$ and $\lambda$ satisfy \eqref{Int6}, it follows from \eqref{add4sec2-40} that
\begin{equation}\label{addsec5-4}
\frac{h^{1-\alpha}}{2\alpha}\Delta_fh^{\alpha}\geq B_{m,\tau}h^2-(m-1)Kh-\frac{a_1}{2}h^{\frac12}|\nabla h|,
\end{equation}
where
$$B_{m,\tau}=\left(\frac{1}{m-1}-\frac{m-1}{2}\left(\frac{m+1}{m-1}-\frac{1}{\tau}\right)^2\right)\tau^2.$$
Then, on $\{h\geq y_2+\delta\}$,
$$h\geq y_2+\delta=\frac{(m-1)K}{B_{m,\tau}}+\delta.$$
So the \eqref{addsec5-4} can be rewritten as
$$\aligned
\Delta_fw^{\alpha}\geq&2\alpha w^{\alpha-1}h^{\frac12}\left(h^{\frac12}\left(B_{m,\tau}h-(m-1)K\right)-\frac{a_1}{2}|\nabla w|\right)\\
\geq&2\alpha w^{\alpha-1}h^{\frac12}\left(h^{\frac12}\delta B_{m,\tau}-\frac{a_1}{2}|\nabla w|\right).
\endaligned$$
There holds
$$y_2+\delta\leq h\leq c(m,\tau,\lambda)\sqrt{K}$$
on $\{h\geq y_2+\delta\}$, so we obtain
$$\Delta_fw^{\alpha}\geq2\alpha w^{\alpha-1}(l_3w-l_4|\nabla w|),$$
where $l_3$ and $l_4$ are two positive constants depending on $m$, $K$, $\tau$ and $\delta$.
\end{proof}
\begin{lemma}\label{adsec5-3}
Let $(M^{n}, g, e^{-f} d\nu)$ be a complete $n$-dimensional smooth metric measure space with ${\rm Ric}_{f}^{m}\geq-(m-1)Kg$ where $K$ is a non-negative constant. Let $v$ be a global positive solution of \eqref{zjInt1} in $M$.
For some $y>0$, we let $w=(h-y)^+$. If $w$ satisfy the following inequality
\begin{equation}\label{addsec5-5}
\Delta_fw^{\alpha}\geq2\alpha w^{\alpha-1}(l_3w-l_4|\nabla w|),
\end{equation}
where $l_3$ and $l_4$ and $\alpha$ are some positive constants, then $w=0$, i.e., $h\leq y$.
\end{lemma}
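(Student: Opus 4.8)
The plan is to recast the hypothesis as a differential inequality for a single nonnegative bounded function, and then kill that function with the Omori--Yau maximum principle for the drift Laplacian $\Delta_f$.

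First I would observe that we are in the global setting, so by Theorem~\ref{Int-1} (applied on $B_R(o)$ and letting $R\to\infty$) the function $h=|\nabla\ln v|^2$ is bounded on $M$; consequently $w=(h-y)^{+}$ is bounded, and, taking $\alpha$ large as in Lemma~\ref{adsec5-2}, the function $W:=w^{\alpha}$ is a nonnegative, bounded, $C^{2}$ function on $M$ (on $\{h\le y\}$ it vanishes together with its first and second derivatives, so no distributional term arises and the inequality holds classically everywhere). Since $\nabla W=\alpha w^{\alpha-1}\nabla w$, the hypothesis $\Delta_f w^{\alpha}\ge 2\alpha w^{\alpha-1}\bigl(l_3 w-l_4|\nabla w|\bigr)$ reads
$$\Delta_f W\ \ge\ c_1 W-c_2\,|\nabla W|,\qquad c_1:=2\alpha l_3>0,\quad c_2:=2l_4>0.$$

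Next I would exploit the curvature bound. Because ${\rm Ric}_f^m\ge-(m-1)Kg$ with $m$ finite, the distance function $r=d(o,\cdot)$ obeys the Bakry--\'Emery Laplacian comparison $\Delta_f r\le (m-1)\sqrt{K}\coth(\sqrt{K}\,r)$ (in the barrier sense), whose right-hand side is bounded above for $r\ge1$; this is exactly what is needed to construct a gauge function and thereby the Omori--Yau maximum principle for $\Delta_f$. Applying that principle to $W$ (which is bounded above) produces a sequence $\{x_k\}\subset M$ with $W(x_k)>\sup_M W-\tfrac1k$, $|\nabla W(x_k)|<\tfrac1k$ and $\Delta_f W(x_k)<\tfrac1k$. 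Plugging these into the displayed inequality gives
$$\tfrac1k\ >\ \Delta_f W(x_k)\ \ge\ c_1 W(x_k)-c_2\,|\nabla W(x_k)|\ >\ c_1\Bigl(\sup_M W-\tfrac1k\Bigr)-\tfrac{c_2}{k},$$
and letting $k\to\infty$ forces $0\ge c_1\sup_M W$, hence $\sup_M W\le0$. Since $W\ge0$, we conclude $W\equiv0$, i.e. $w\equiv0$, i.e. $h\le y$, which is the assertion.

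I expect the only genuine difficulty to be the justification of the Omori--Yau maximum principle for $\Delta_f$ under merely the $m$-Bakry--\'Emery lower bound: one has to build the gauge function from the comparison above and dispose of the cut locus by Calabi's trick (or argue with upper barriers); the remaining steps are immediate. For completeness I note that a more ``integral'' route---multiplying the inequality by $\phi^{2}w^{\alpha}$ and integrating---leads, for $\alpha$ large enough, to $(c_1-\tfrac12 c_2^{2})\int_{B_R}W^{2}e^{-f}d\nu\le C R^{-2}\int_{B_{2R}}W^{2}e^{-f}d\nu$, but this does not close by itself without an extra control on the weighted volume growth, so the maximum-principle argument is preferable.
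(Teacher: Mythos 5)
Your argument is correct, but it takes a genuinely different route from the paper. The paper runs a self-contained Caccioppoli-type iteration: it tests \eqref{addsec5-5} against $w^{\gamma}\eta^{2}$, absorbs the gradient terms by Young's inequality with $\epsilon=(l_4+1)/\gamma$, and arrives at $l_3\int_M w^{\alpha+\gamma}\eta^{2}e^{-f}d\nu\leq\epsilon\int_M w^{\alpha+\gamma}|\nabla\eta|^{2}e^{-f}d\nu$; iterating this over concentric balls $B_k$ forces $\int_{B_k}w^{\alpha+\gamma}e^{-f}d\nu$ to grow at least like $(\hat C_1/\epsilon)^{k}$ if $w\not\equiv0$, which, since $w$ is bounded, contradicts the weighted volume comparison $V_f(B_k)\leq V_f(B_1)e^{(m-1)k\sqrt K}$ once $\gamma$ is chosen so large that $\ln(\hat C_1/\epsilon)>2(m-1)\sqrt K+2$. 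This is exactly the ``extra control on the weighted volume growth'' you said the integral route would need — the paper has it, together with the crucial extra degree of freedom in the exponent $\gamma$, which lets the constant beat the exponential volume growth. Your route instead feeds the inequality $\Delta_f W\geq c_1W-c_2|\nabla W|$, $W=w^{\alpha}$ bounded, into the full Omori--Yau maximum principle for $\Delta_f$; the curvature hypothesis then enters through the weighted Laplacian comparison $\Delta_f r\leq(m-1)\sqrt K\coth(\sqrt K r)$ rather than through volume comparison. The trade-offs: your proof is shorter and avoids the iteration entirely, but it imports the $f$-Omori--Yau principle (in its strong form, with the gradient condition $|\nabla W(x_k)|<1/k$, which you genuinely need to handle the $-c_2|\nabla W|$ term) as an external input that would have to be proved or cited, and it needs $W=w^{\alpha}$ to be $C^{2}$, hence $\alpha>2$ — harmless in the application via Lemma~\ref{adsec5-2}, where $\alpha$ is large, but the lemma as stated allows any $\alpha>0$, and the paper's weak/integral formulation covers that case without regularity concerns across $\{h=y\}$. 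Also make sure you do not quote Theorem~\ref{Int-1} merely to get boundedness of $h$ in a way that presupposes $K>0$ normalization; the local estimate $\sup_{B_{R/2}}|\nabla v|/v\leq c(1+\sqrt KR)/R$ with $R\to\infty$ does give $h\leq c^{2}K$ globally, which is all you use.
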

\begin{proof}
Let $\eta\in C_0^\infty (M, \mathbb{R})$ be a cut-off function to be determine later. For constant $\gamma>1$, we choose $w^\gamma\eta^2$ as test function. Now, we multiply the both sides of \eqref{addsec5-5} by $w^\gamma\eta^2$, integrate then the obtained inequality over $M$ to get
$$\int_{M}(w^\gamma\eta^2)\Delta_fw^{\alpha}e^{-f}d\nu\geq\int_{M}2\alpha w^{\alpha+\gamma-1}(l_3w-l_4|\nabla w|)\eta^2e^{-f}d\nu.$$
Using the integration by parts, we have
$$\aligned
&-\int_{M}\alpha\gamma w^{\alpha+\gamma-2}|\nabla w|^2\eta^2e^{-f}d\nu-\int_{M}2\alpha w^{\alpha+\gamma-1}\langle\nabla w,\nabla\eta\rangle\eta e^{-f}d\nu\\
\geq&\int_{M}2\alpha w^{\alpha+\gamma-1}(l_3w-l_4|\nabla w|)\eta^2e^{-f}d\nu.
\endaligned$$
Since
$$\langle\nabla w,\nabla\eta\rangle\geq-|\nabla w||\nabla\eta|,$$
we conclude that
$$\aligned
&\int_{M}2l_3w^{\alpha+\gamma}\eta^2e^{-f}d\nu+\int_{M}\gamma w^{\alpha+\gamma-2}|\nabla w|^2\eta^2e^{-f}d\nu\\
\leq&\int_{M}2l_4w^{\alpha+\gamma-1}|\nabla w|\eta^2e^{-f}d\nu+\int_{M}2w^{\alpha+\gamma-1}|\nabla w||\nabla\eta|\eta e^{-f}d\nu.
\endaligned$$
Now applying Young's inequality, we get
$$2l_4w^{\alpha+\gamma-1}|\nabla w|\eta^2\leq l_4w^{\alpha+\gamma-2}\frac{|\nabla w|^2}{\epsilon}\eta^2+\epsilon l_4 w^{\alpha+\gamma}\eta^2;$$
$$2w^{\alpha+\gamma-1}|\nabla w||\nabla\eta|\eta\leq w^{\alpha+\gamma-2}\frac{|\nabla w|^2}{\epsilon}\eta^2+\epsilon w^{\alpha+\gamma}|\nabla\eta|^2.$$
Now we choose $\epsilon$ such that
$$\frac{l_4+1}{\epsilon}=\gamma,$$
then we obtain
$$\int_{M}2l_3w^{\alpha+\gamma}\eta^2e^{-f}d\nu\leq\int_{M}\epsilon w^{\alpha+\gamma}|\nabla\eta|^2e^{-f}d\nu+\int_{M}\epsilon l_4 w^{\alpha+\gamma}\eta^2e^{-f}d\nu.$$
We choose $\gamma$ large enough such that $\epsilon l_4<l_3$, we have
$$\int_{M}l_3w^{\alpha+\gamma}\eta^2e^{-f}d\nu\leq\int_{M}\epsilon w^{\alpha+\gamma}|\nabla\eta|^2e^{-f}d\nu.$$
Now, if $w\neq0$, without loss of generality we may suppose that $w\mid_{B_1}\neq0$ for some geodesic
ball $B_1$ with radius 1, we observe that there always holds true
$$\int_{B_1}e^{-f}d\nu>0.$$
We use $B_k$ to represent the geodesic ball with a radius of $k$. Then we may choose $\eta\in C_{0}^{\infty}(B_{k+1})$ such that
$$\eta\equiv1\text{ in }B_{k}\quad\text{and}\quad|\nabla\eta|\leq4\text{ in }B_{k+1}.$$
Then, we derive that
$$\aligned
\int_{B_{k+1}}16\epsilon w^{\alpha+\gamma}e^{-f}d\nu\geq&\int_{B_{k+1}}l_3w^{\alpha+\gamma}e^{-f}d\nu\\
\geq&\int_{B_k}l_3w^{\alpha+\gamma}e^{-f}d\nu.
\endaligned$$
By iteration on $k$, we get
\begin{equation}\label{sec5-5}
\int_{B_k}w^{\alpha+\gamma}e^{-f}d\nu\geq\left(\frac{\hat{C}_1}{\epsilon}\right)^k\int_{B_{1}}w^{\alpha+\gamma}e^{-f}d\nu.
\end{equation}
where $\hat{C}_1$ is positive constant.
Since $w$ is uniform bounded, we have
\begin{equation}\label{sec5-6}
\hat{C}_2^{\alpha+\gamma}V_f(B_k)\geq\int_{B_k}w^{\alpha+\gamma}e^{-f}d\nu
\end{equation}
and
\begin{equation}\label{sec5-7}
\int_{B_{1}}w^{\alpha+\gamma}e^{-f}d\nu\geq \hat{C}_3^{\alpha+\gamma}V_f(B_{1}),
\end{equation}
where $\hat{C}_2$ and $\hat{C}_3$ are positive constant, and $V_{f}(B_{k})=\int_{B_{k}}e^{-f}d\nu$.
By volume comparison theorem, we have
\begin{equation}\label{sec5-8}
\frac{V_f(B_k)}{V_f(B_1)}\leq e^{(m-1)k\sqrt K}.
\end{equation}
Let $\hat{C}_{4}=\frac{\hat{C}_2}{\hat{C}_3}$, then plugging \eqref{sec5-6}, \eqref{sec5-7}, and \eqref{sec5-8} into \eqref{sec5-5}, we infers
\begin{equation}\label{sec5-9}
\hat{C}_{4}^{\alpha+\gamma}e^{(m-1)k\sqrt K}\geq e^{k\ln\frac{\hat{C}_1}{\epsilon}}.
\end{equation}
We choose $\gamma$ such that
$$\ln\frac{\hat{C}_1}{\epsilon}>2(m-1)\sqrt{K}+2,$$
if $k$ is chosen sufficiently large, \eqref{sec5-9} can not hold. Hence, $w\equiv0$. So we finish the proof of this lemma.
\end{proof}
Now, we are in the position to give the proof of Theorem \ref{adsec5-1}.
\begin{proof}
By Lemma \ref{adsec5-2} and Lemma \ref{adsec5-3}, we can infer the Theorem \ref{adsec5-1}. If $m$, $\lambda$, $\tau$ satisfy \eqref{Int5} or \eqref{Int6}, then by Lemma \ref{adsec5-2}, for any $\delta>0$ and $w=(h-y_i-\delta)^+$, $i=1,2$, we obtain
$$\Delta_fw^{\alpha}\geq2\alpha w^{\alpha-1}(l_3w-l_4|\nabla w|).$$
According to Lemma \ref{adsec5-3}, we obtain $w\equiv0$, i.e., $h\leq y_i+\delta$. Since $\delta$ can be arbitrary small, it is clear that $h\leq y_i$. Since
$$h=|\nabla u|^2=|(\nabla v)/v|^2,$$
thus, we get
$$\frac{|\nabla v|}{v}\leq\sqrt{y_i}.$$
Hence, we finish the proof of Theorem \ref{adsec5-1}.
\end{proof}

\section{Declarations}
\textbf{Data availibility}: Data sharing is not applicable to this article as no new data were created or analyzed in this study.

\textbf{Conflict of interest}: The authors declare that they have no conflict of interest.

\textbf{Author Contributions}: All authors have contributed equally to this work. All authors reviewed the manuscript.

\vskip 6mm
\noindent{\bf Acknowledgements}

\noindent
The author Y. Wang
is supported partially by National Key Research and Development projects of China (Grant
No. 2020YFA0712500).

\end{document}